\patchcmd{\section}{\scshape}{\bfseries}{}{}
\renewcommand{\@secnumfont}{\bfseries}
\newtheorem{theorem}{Theorem}
\newtheorem{lemma}{Lemma}
\newtheorem{corollary}[theorem]{Corollary}
\newtheorem{proposition}[theorem]{Proposition}
\newtheorem{definition}[theorem]{Definition}
\theoremstyle{definition}
\theoremstyle{plain}
\newcommand{\beqlbl}{\begin{equation}}
\newcommand{\eeqlbl}{\end{equation}}
\newcommand{\N}{\ensuremath{\mathbb{N}} }
\newcommand{\Z}{\ensuremath{\mathbb{Z}} }
\renewcommand{\P}{\mathbb{P}}
\newcommand{\bs}{\mathbf{s}}
\newcommand{\br}{\mathbf{r}}
\newcommand{\cU}{\mathcal{U}}
\tikzstyle{vertex}=[circle, draw, inner sep=0pt, minimum size=6pt]
\tikzstyle{Vertex}=[circle, draw, inner sep=0pt, minimum size=14pt]
\tikzstyle{Vertexc}=[circle, draw, inner sep=0pt, minimum size=14pt, fill=blue!30]
\tikzstyle{vertexc}=[circle, draw, inner sep=0pt, minimum size=6pt, fill=red!40]
\tikzstyle{vertexcg}=[circle, draw, inner sep=0pt, minimum size=6pt, fill=green!70!black]
\newcommand{\beq}{\begin{equation*}}
\newcommand{\eeq}{\end{equation*}}
\newcommand{\ba}{\begin{align*}}
\newcommand{\ea}{\end{align*}}
\newcommand{\matbegin}[1]{\left (  \begin{array} {#1} }
\newcommand{\matend}{ \end{array} \right ) }
\newcommand{\bt}{\mathbf{t}}
\newcommand{\Ht}{\mathrm{ht}}
\newcommand{\fT}{\mathfrak{T}}
\newcommand{\avn}[1]{\textbf{A\!v}_n(#1)}
\begin{document}

\title[Local limits of permutations avoiding a pattern of length three]{A Galton-Watson tree approach to local limits of permutations avoiding a pattern of length three}

\author[Jungeun~Park]{ \ Jungeun~Park}
\author[Douglas~Rizzolo]{ \ Douglas~Rizzolo}

\thanks{
Supported in part by NSF grant DMS-1855568
}

\vskip1.3cm

\begin{abstract}
We use local limits of Galton-Watson trees to establish local limit theorems for permutations conditioned to avoid a pattern of length three.  In the case of ${\bf 321}$-avoiding permutations our results resolve an open problem of Pinsky.  In the other cases our results give new descriptions of the limiting objects in terms of size-biased Galton-Watson trees.           
\end{abstract}

\maketitle
\section{Introduction}
If $\pi$ and $\sigma$ are permutations of $[n]=\{1,\dots, n\}$ and $[m]$ respectively with $n>m$, then $\pi$ is said to contain $\sigma$ if there exist indices $i_1<\dots< i_m$ such that for all $j<k$, $\pi(i_j)<\pi(i_k)$ if and only if $\sigma(j)<\sigma (k)$.  The permutation $\pi$ is said to avoid $\sigma$ if it does not contain $\sigma$.  Recently there has been considerable interest in the study of random pattern-avoiding permutations.  Initially there was a focus on the case when $m=3$, $\sigma$ was a fixed permutation of $\{1,2,3\}$, and $n$ tended to infinity, which was motivated by studying the longest increasing subsequence problem for pattern avoiding permutations \cite{MR2343720,notknuth,Sniady}.  Since the initial work, the focus has expanded to the understanding the rich behavior of permutations avoiding one pattern, or several patterns, see e.g.\! \cite{borga2018localsubclose, zbMATH07527828, zbMATH07481271, borga2019square, hoffman2017pattern,  HRS1, hoffman2019scaling, Ja_multiple, ML, madras_yildirim,  mrs, zbMATH07749502} for a small sample of the recent literature illustrating the broad range phenomena pattern avoiding permutations exhibit.

Even with all of the activity in the field, permutations avoiding a pattern of length three continue to be a source of interesting behavior.  In the present paper, we are concerned with local limits of permutations avoiding a pattern of length three.  That is, for a fixed pattern $\sigma$ of length three, we let $\Pi_n$ be a uniformly random permutation of $[n]$ conditioned to avoid $\sigma$ and for each $k$ we consider the convergence in distribution of $(\Pi_n(1),\dots, \Pi_n(k))$ as $n$ tends to infinity.  Local limits of this type were used implicitly in \cite{HRS2}, but they were first directly studied in \cite{pinsky2020infinite}.  One of the motivations in \cite{pinsky2020infinite} for studying this type of local limit is to investigate how the limits fit within the recently developed theory of infinite regenerative permutations \cite{zbMATH07067272}.  We remark that local limits of a different nature were considered in \cite{Borga2019}.  In \cite{pinsky2020infinite}, local limits are obtained for $\sigma \neq {\bf 321}$, but only partial results were obtained for $\sigma= {\bf 321}$ and the question of whether or not the local limit existed was left open.  In the present paper we answer this question and show that the local limit does, in fact, exist.  Our methods give a unified approach that works when $\sigma$ is any pattern of length three.

Our main result is most naturally stated by considering permutations of $[n]$ as functions from $\N$ to its one-point compactification $\N^*= \N\cup\{\infty\}$.  To formalize this, let $S_n$ be the set of permutations of $[n]$, let $\avn{\sigma}$ be the subset of $S_n$ of permutations avoiding $\sigma$, and let $F(\N,\N^*)$ be the set of functions from $\N$ to $\N^*$ with the topology of pointwise convergence.  We embed $S_n$ in $F(\N,\N^*)$ by setting $\pi(k)=k$ for $k>n$. 

\begin{theorem}
Fix $\sigma\in S_3$ and suppose that $\Pi^\sigma_n$ is a uniformly random element of $\avn{\sigma}$.  Then there exists an $F(\N,\N^*)$-valued random variable $\Pi^\sigma$ such that $\Pi^\sigma_n \overset{d}{\longrightarrow} \Pi^\sigma$ as $n\to \infty$.
\end{theorem}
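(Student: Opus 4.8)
\noindent\emph{Proof strategy.} The plan is to encode $\sigma$-avoiding permutations of $[n]$ by plane trees with $n$ edges --- equivalently, by Dyck paths in $\dn$, equivalently by critical Galton--Watson trees with the geometric offspring law $\mu(j)=2^{-(j+1)}$ (which has mean one) conditioned to have $n$ edges --- through bijections $\Phi_n^\sigma\colon \avn{\sigma}\to\dn$ that are \emph{asymptotically local}: for every $k$ and $\varepsilon>0$ there are a radius $m$ and a fixed map $g_k$ such that, outside an event of probability at most $\varepsilon$ for all large $n$, the tuple $(\pi(1),\dots,\pi(k))$ equals $g_k$ applied to the restriction of the tree $\Phi_n^\sigma(\pi)$ to the ball of radius $m$ about its root (with the convention that a coordinate unresolved by that ball is recorded as $\infty$). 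For $\sigma\in\{{\bf 123},{\bf 321}\}$ the natural encoding instead reads the first entries off \emph{both} ends of the Dyck path, which is still a neighbourhood of the root of the corresponding tree, so the same framework applies. Granting such encodings, the theorem follows by transporting the classical local limit theorem for conditioned Galton--Watson trees through the $g_k$.

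Concretely I would proceed as follows. \textbf{(i) Organize the patterns.} The four patterns ${\bf 132},{\bf 213},{\bf 231},{\bf 312}$ are treated by a single scheme built from the recursive decomposition $\pi=\alpha\,n\,\beta$ at the position of the maximum, so I take ${\bf 231}$ as the representative; ${\bf 123}$ is degenerate (one checks $\mathbb{P}(\Pi_n^{\bf 123}(1)\le M)\to 0$ for every $M$, so $\Pi^{\bf 123}\equiv\infty$), and ${\bf 321}$ is the substantive case --- the one Pinsky left open. \textbf{(ii) Build the local encoding.} For ${\bf 231}$ use the classical bijection with binary trees (equivalently Dyck paths) coming from the decomposition at the maximum; for ${\bf 321}$ use a Dyck-path bijection adapted to the splitting of a ${\bf 321}$-avoiding permutation into its left-to-right maxima and the complementary increasing subsequence. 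In each case, tracing through the recursion where the values $\pi(1),\dots,\pi(k)$ originate shows that they are determined by a bounded neighbourhood of the root \emph{together with} the coarse information of which of the finitely many relevant subtrees is macroscopic; a limiting entry equal to $\infty$ corresponds exactly to the infinite spine running through the associated subtree. \textbf{(iii) Limit of the trees.} A critical Galton--Watson tree conditioned on its number of edges converges in distribution, in the local topology, to Kesten's size-biased tree $\widehat\tau$, whose root and spine vertices carry the size-biased offspring law $\widehat\mu(j)=j\mu(j)$; in particular the radius-$m$ ball about the root of $\Phi_n^\sigma(\Pi_n^\sigma)$ converges in distribution to the corresponding ball of $\widehat\tau$. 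Since, off the negligible event, $(\pi(1),\dots,\pi(k))$ is the fixed function $g_k$ of that ball, $(\Pi_n^\sigma(1),\dots,\Pi_n^\sigma(k))$ converges in distribution; letting $m\to\infty$ (and $\varepsilon\to0$) the value read from $\widehat\tau$ stabilizes almost surely, giving a limit law $\mu_k^\sigma$ and, in the non-${\bf 321}$ cases, an explicit description of the limit as a functional of a size-biased Galton--Watson tree. \textbf{(iv) Assemble.} The $\mu_k^\sigma$ are consistent, being limits of the consistent finite marginals of $\Pi_n^\sigma$; since $F(\N,\N^*)=(\N^*)^{\N}$ is compact and metrizable, convergence of all finite-dimensional marginals is equivalent to convergence in distribution of $\Pi_n^\sigma$, and the limit is the $F(\N,\N^*)$-valued variable $\Pi^\sigma$ determined by the family $(\mu_k^\sigma)_k$.

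The crux, and the main obstacle, is step (ii): arranging the encoding so that the first $k$ entries genuinely localize near the root. This is exactly what is not transparent for ${\bf 321}$ and is why Pinsky's question was open. One must show that, with probability tending to one, every ``decision'' fixing $\pi(1),\dots,\pi(k)$ is made within a distance of the root that is bounded in probability --- e.g.\ for ${\bf 231}$ the relevant left-spine is finite in Kesten's tree, hence tight in the conditioned trees --- the only genuinely unbounded feature being \emph{which} near-root subtree carries the bulk of the mass, a feature detected in the local limit by the spine of $\widehat\tau$. Establishing this simultaneously yields the convergence and the concrete description of $\Pi^\sigma$. A subsidiary technical point is the bookkeeping with the point at infinity: one must check that the $g_k$ behave continuously on tree-balls that sit on an infinite spine and that the exceptional events in (ii) can be chosen uniformly in $n$, so that they survive the passage to the limit.
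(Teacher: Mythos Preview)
Your outline is correct and follows essentially the same route as the paper: encode $\avn{\sigma}$ by rooted plane trees via explicit bijections, invoke the local limit of conditioned critical Galton--Watson trees to Kesten's size-biased tree $\tilde T$, and then transport the tree convergence through the bijection to obtain convergence of $(\Pi_n^\sigma(1),\dots,\Pi_n^\sigma(k))$. The paper's execution is somewhat cleaner than your ``asymptotically local with high probability'' framing: it extends each $\Phi^\sigma$ explicitly to the class $\fT_\infty$ of locally finite trees with a unique infinite spine and proves a \emph{deterministic} continuity statement --- if $\bt_n\in\fT_f$, $\bt\in\fT_\infty$, and $\bt_n\to\bt$ locally, then $\Phi^\sigma_{\bt_n}(k)\to\Phi^\sigma_{\bt}(k)$ in $\N^*$ for every $k$ --- which, combined with Skorokhod representation, yields the result in one line without exceptional events, tightness bookkeeping, or assembling finite-dimensional laws via compactness.
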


We are able to give very precise descriptions of each $\Pi^\sigma$, but we delay this to Theorem \ref{thm main precise} since we must first develop the appropriate notation.  Our methods can be viewed as an extension of the methods developed in \cite{HRS2}.  We rely on bijections with rooted ordered trees and we obtain local limits and descriptions of the limiting objects using deterministic arguments based on the local convergence of Galton-Watson trees conditioned on their number of vertices to size-biased Galton-Watson trees.  In contrast, the methods used in \cite{pinsky2020infinite} are based on direct enumeration and regenerative properties of pattern-avoiding permutations.
 
\section{Tree formalism} 
In this section we introduce the a formalism for trees and tree convergence that will let us give a unified approach to infinite limits of permutations avoiding a pattern of length three.  The formalism for trees we adopt is Neveu's formalism for rooted ordered trees \cite{N86}.  This will give us a consistent way to refer to the vertices of different trees.  We let
\[ \cU = \bigcup_{n\geq 0} \{1,2,\dots\}^n,\]
where $\{1,2,\dots\}^0 = \{\emptyset\}$.  For $u\in\cU$, we let $|u|$ be the length of $u$.  That is, $|u|$ is the unique integer such that $u\in \{1,2,\dots\}^{|u|}$.  We treat the elements of $\cU$ as lists, and for $u,v\in \cU$, we denote the concatenation of $u$ and $v$ by $uv$.  The empty list is the identity for this operation, that is, if $u=\emptyset$ then $uv=vu=v$.  The set of ancestors of $u$ is defined to be
\[ A_u = \{ v \in \cU  : \exists w\in \cU \textrm{ such that } u=vw\}\]

We can consider $\cU$ as the vertex set of a rooted ordered tree $U_\infty$, called the Ulam-Harris tree, where we take the lexicographic order on $\cU$ as the order, $\emptyset$ as the root, and for every $u\in \cU$ and $i\in \{1,2,3,\dots\}$ there is an edge from $u$ to $ui$.  By a rooted, ordered tree we will mean a (possibly infinite) subtree $\bt$ of $U_\infty$ such that
\begin{enumerate}
\item $\emptyset\in \bt$.
\item If $u\in\bt$ then $A_u\subseteq \bt$.
\item For every $u\in  \bt$, if $ui\in \bt$ then $uj\in \bt$ for all $j \leq i$.
\end{enumerate}

We denote the number of vertices in $\bt$ by $|\bt|$.  A tree $\bt\subseteq U_\infty$ is considered as rooted and ordered by taking $\emptyset$ to be the root and using the left-to-right order on $\bt$ induced by the lexicographic order on $\cU$.  The lexicographic order of $\cU$ will be denoted by $\leq$.  If $u\in \bt$, the vertices in $A_u$ are the vertices in $\bt$ on the path from the root to $u$.  For $u\in\bt$ the height of $u$, denoted $\Ht(u)$, is the number of edges on the path from the root of $\bt$ to $u$.  A consequence of our formalism is that $\Ht(u)= |u|$.  For $u\in \bt$ the set of descendants of $u$, also called the fringe subtree at $u$, is defined as
\[ \bt_u= \{v\in \bt: \exists w \in \cU \textrm{ such that } v=uw\}.\]
If $v$ is a descendant of $u$ and $|v|=|u|+1$ we will say that $v$ is a child of $u$ and $u$ is the parent of $v$ -- note that in this case there exists some $i\in \{1,2,\dots\}$ such that $v=ui$.  

For a tree $\bt$ and $u\in \bt$, define $d_\bt(u)=0 \vee \sup\{ i : ui\in\bt\}$.  We also define $d_\bt(u)=-1$ if $u\notin \bt$.  A vertex $u\in \bt$ is called a leaf if $d_\bt(u)=0$.  This differs slightly from the typical definition of a leaf because the root is not considered a leaf unless the tree has only on vertex, even though the root may have only one edge incident to it.   The number of leaves in $\bt$ will be denoted by $L(\bt)$.  

A tree $\bt$ is called \textit{locally finite} if $d_\bt(u)<\infty$ for all $u\in\cU$ and define
\[ \fT = \{ \bt\subseteq U_\infty : \bt \textrm{ is locally finite}\}.\]

If $\bt\in \fT$ and $v\in  \cU$, we define the re-rooted tree $v\bt$ by
\[ v\bt =  \{ vu \in \cU : u\in \bt\}.\]
Note that $v\bt \notin \fT$ if $v\neq \emptyset$ since, for example, $\emptyset \notin v\bt$.  $v\bt$ is still a subtree of the Ulam-Harris tree $U_\infty$ in the regular graph-theoretic sense, but we take its root to be $v$ instead of $\emptyset$.  For $\bs,\bt\in \fT$ and $v\in \bt$, we say that $\bs$ is attached to $\bt$ at $v$ if for some $i\geq 1$ 
\[ vi\bs = \bt_{vi}= \{u\in \bt : u=viw \textrm{ for some } w\in \cU\}.\]
If $i$ is important we will say that $\bs$ is the $i$'th subtree from the left attached to $v$.  This is something of an abuse of terminology because $\bs$ need not be a subtree of $\bt$ in the graph-theoretic sense, but rather it is isomorphic to a subtree of $\bt$ (specifically, the subtree $vi\bs$).

For a tree $\bt\in \fT$ and a leaf $l\in \bt$ we define the attachment operator $\langle \cdot \rangle_{\bt,l} : \cup_{k\geq 1} \fT^k \to \fT$ by defining $\langle \bt_1,\dots, \bt_k \rangle_{\bt,l}$ to be the subtree of $U_\infty$ with vertex set 
\[ \bt \cup \bigcup_{i=1}^k li\bt_i.\]
Note $l$ is a vertex in $\langle \bt_1,\dots, \bt_k \rangle_{\bt,l}$ with degree $k$ and $\bt_i$ is the $i$'th subtree from the left in $\langle \bt_1,\dots, \bt_k \rangle_{\bt,l}$.

\section{Local Limits of Trees}
We topologize $\fT$  with the convergence $\bt_n\rightarrow \bt$ if $d_{\bt_n}(u) \rightarrow d_{\bt}(u)$ for all $u\in \cU$.  Since $\cU$ is countable this is convergence can be metrized by, for example, the complete metric
\[ D(\bt,\mathbf{s}) = \sum_{i=1}^\infty \frac{ |d_{\bt}(f(i)) - d_{\mathbf{s}}(f(i))| \wedge 1}{2^i},\]
where $f: \{1,2,\dots, \}\to \cU$ is a bijection.  For our purposes, it is useful to observe that $\bt_n\to \bt$ if and only if for each $m\geq 1$, $\bt_{n}^{[m]} = \bt^{[m]}$ for all sufficiently large $n$.

Let $\xi$ be a probability distribution on $\{0,1,2,\dots\}$ with mean $1$, ie. $\sum_{i} i\xi(i) =1$, and $\xi(1)<1$.  A $\fT$-valued random variable is called a Galton-Watson tree with offspring distribution $\xi$ if
\[ \P(T = \bt) = \prod_{u\in \bt} \xi(d_\bt(u))\]
for all $\bt \in \fT$.  For $\bt\in\fT$, let $\bt^{[k]}$ be the subtree of $\bt$ comprised of vertices of height at most $k$.  Let $T$ be a $\xi$-Galton-Watson tree and define the probability measure $\nu_\xi$ on $\fT$ to be the unique measure on infinite trees such that for every $k$ and $\bt_0\in \fT$ with height $k$, 
\[ \nu_\xi(\{ \bt \in \fT : \bt^{[k]} = \bt_0\}) = (\#_k\bt_0) \P( T^{[k]} = \bt_0),\]
where $\#_k\bt$ is the number of vertices in $\bt$ with height $k$.  See e.g. \cite[Lemma 1.14]{Kesten86} for the existence and uniqueness of this measure.    Let $\tilde T$ have distribution $\nu_\xi$.  $\tilde T$ is called a size-biased $\xi$-Galton-Watson tree.   The size biased distribution of $\xi$ is the distribution
\[ \tilde \xi(n) = n\xi(n).\]
Note that this is a probability distribution because $\xi$ has mean $1$.  Observe that, directly from the definition of $\nu_\xi$, we have
\[ \P(\#_1 \tilde T = k) = \tilde \xi(k),\]
which justifies calling $\tilde T$ the size-biased tree. 

The properties of $\tilde T$ that are most important for our purposes can most easily be seen from the constructions in Chapter 12 of \cite{LyonsPeres} and Section 5 of \cite{J12}.  In particular, we have the following sequential construction of $\tilde T$ from \cite[Chapter 12]{LyonsPeres}, which we have adapted to our tree formalism.  Let $\tilde T_0 = \emptyset$ and $\eta_0=\emptyset$.  Conditionally given $\tilde T_n$ and $\eta_0,\dots, \eta_n$ where $\eta_i$ is a leaf of $\tilde T_i$ at height $i$ and $\eta_{i+1}$ is a child of $\eta_i$, construct $\tilde T_{n+1}$ as follows.  Choose $K_n$ according to $\tilde \xi$ and, given $K_n$ let $i^*$ be chosen uniformly at random from $\{1,2,\dots, K_n\}$ and let $T_1, \dots T_{K_n}$ be independent $\xi$-Galton-Watson trees.  Define $\eta_{n+1}=\eta_n i^*$ and 
\[ \tilde T_{n+1} = \langle T_1,\dots, T_{i^*-1},\emptyset, T_{i^*+1},\dots T_{K_n}\rangle_{\tilde T_n,\eta_n}.\]
Note that $\eta_{n+1}$ is the $i^*$'th child from the left of $\eta_n$ in $\tilde T_{n+1}$, and is a leaf, while independent $\xi$-Galton-Watson trees are attached to the other $K_n-1$ children of $\eta_n$ in $\tilde T_{n+1}$.  Moreover, going from $\tilde T_n$ to $\tilde T_{n+1}$ involves adding descendants to $\eta_i$, but other parts of $\tilde T_n$ are not modified.  Furthermore, $\tilde T_0 \subset \tilde T_1\subset \tilde T_2\subset \cdots $.  Then, following  \cite[Chapter 12]{LyonsPeres}, we have that
\[ \tilde T =_d \cup_{i=0}^\infty \tilde T_i.\]

For our purposes, $\tilde T$ has two important properties, which we summarize in the next two lemmas.  The first is an immediate consequence of the discussion above.

\begin{lemma}\label{lemma sizebp}
With probability $1$, $\tilde T$ is locally finite and has a unique infinite path (called a spine) with infinitely many independent $\xi$-Galton-Watson trees attached to the left and right of the spine. 
\end{lemma}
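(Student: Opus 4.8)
The plan is to verify each assertion directly from the sequential construction of $\tilde T$ recalled above, using the distributional identity $\tilde T =_d \bigcup_{i=0}^\infty \tilde T_i$; since the three claims are all almost-sure statements, it suffices to establish them for this increasing union. By construction $\eta_{i+1}$ is a child of $\eta_i$ for every $i\ge 0$, so $(\eta_i)_{i\ge 0}$ is an infinite path from the root in $\bigcup_i \tilde T_i$, and this will be our candidate spine.

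For local finiteness, I would observe that the out-degree of any fixed $u\in\cU$ is determined after finitely many steps of the construction: passing from $\tilde T_n$ to $\tilde T_{n+1}$ only adds descendants of $\eta_n$, so for $n\ge |u|$ with $u\ne \eta_n$ the quantity $d_{\tilde T}(u)$ no longer changes. If $u=\eta_n$ for some $n$, then $d_{\tilde T}(u)=K_n<\infty$; otherwise either $u\notin\tilde T$ or $u$ lies inside one of the independent $\xi$-Galton-Watson trees attached during the construction, in which case $d_{\tilde T}(u)$ equals its degree in that tree, which is finite almost surely. Since $\cU$ is countable, a.s. $d_{\tilde T}(u)<\infty$ for every $u$, i.e.\ $\tilde T\in\fT$.

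Uniqueness of the infinite path rests on the classical fact that a Galton-Watson tree whose offspring distribution has mean $1$ and satisfies $\xi(1)<1$ is almost surely finite. Every subtree grafted off the spine is such a tree, and only countably many of them are attached, so a.s.\ all are finite; consequently $\tilde T$ has no infinite path leaving the spine, and $(\eta_i)_{i\ge0}$ is its unique infinite path. Finally, for the attachment claim, note that at step $n+1$ the trees $T_1,\dots,T_{i^*-1}$ are attached to the left of $\eta_n$ and $T_{i^*+1},\dots,T_{K_n}$ to the right, so a subtree is attached to the left of the spine at step $n+1$ precisely when $i^*\ge 2$; these events are independent over $n$ with common probability $p=\sum_{k\ge 2}\tilde\xi(k)\frac{k-1}{k}$, which is strictly positive because $\xi$ necessarily puts mass on $\{2,3,\dots\}$ (were it supported on $\{0,1\}$ its mean $1$ would force $\xi(1)=1$, contradicting $\xi(1)<1$). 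By the second Borel-Cantelli lemma infinitely many of these events occur, and symmetrically on the right, so infinitely many independent $\xi$-Galton-Watson trees are attached to each side of the spine.

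There is no genuine obstacle here: as the authors remark, the lemma is essentially a matter of reading properties off the construction. The only external input is the almost-sure finiteness of a critical Galton-Watson tree, which powers the uniqueness of the spine, and the only computation is the short Borel-Cantelli argument above for ``infinitely many trees on each side''; the remainder is bookkeeping on which vertices and subtrees are frozen at each step.
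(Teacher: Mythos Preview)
Your proposal is correct and follows exactly the approach the paper takes: the paper offers no proof beyond the sentence ``an immediate consequence of the discussion above,'' and you have simply written out the details of how the sequential construction yields each of the three claims. Your added justifications (criticality of $\xi$ forces the grafted trees to be a.s.\ finite, and the Borel--Cantelli argument for infinitely many attachments on each side) are the natural ones and require nothing the paper has not already set up.
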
  

Indeed, the result of this Lemma can also be taken as the basis for a construction of $\tilde T$, see \cite[p. 115]{J12}.

\begin{lemma}\label{lemma local limit}
Let $T$ be a $\xi$-Galton-Watson tree and for $n$ such that $\P(\# T=n)>0$, let $T_n$ be distributed like $T$ conditioned on $\P(\# T=n)$.  Then
\[ T_n \overset{d}{\longrightarrow} \tilde T.\] 
That is, for every tree $\bt\in \fT$ and $k\in \{1,2,3,\dots\}$ we have
\[ \lim_{n\to\infty} \P\left(T_n^{[k]} = \bt\right)=\lim_{n\to\infty} \P\left(T^{[k]} = \bt \middle| \# T = n\right) = \P\left(\tilde T^{[k]} = \bt\right),\]
where the limit is understood to be along $n$ such that $\P(\# T=n)>0$.
\end{lemma}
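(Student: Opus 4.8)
The plan is to reduce the convergence $T_n \overset{d}{\longrightarrow} \tilde T$ to a finite computation at level $k$, then evaluate both sides using the explicit descriptions of the laws involved. First I would fix a finite tree $\bt$ of height exactly $k$ (if $\bt$ has height strictly less than $k$, then $\P(T_n^{[k]}=\bt)=0$ unless all vertices at level $k-1$ of $\bt$ are leaves of height $<k$, which is a degenerate case one handles separately; the substantive case is $\mathrm{ht}(\bt)\le k$ and I will treat $\{\bt : \bt^{[k]}=\bt\}$ throughout). The reformulation of convergence in $\fT$ recorded just before the statement says that it suffices to prove, for each $k$ and each admissible $\bt$, that $\P(T_n^{[k]}=\bt) \to \P(\tilde T^{[k]}=\bt)$, since $\bt\mapsto\bt^{[k]}$ only depends on finitely many coordinates $d_\bt(u)$ and there are countably many possible values; so the whole lemma is this one limit.

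Next I would write both sides explicitly. For the Galton-Watson side, decompose the event $\{T^{[k]}=\bt,\ \#T=n\}$ according to the fringe subtrees hanging off the leaves of $\bt$ that sit at height $k$ (the vertices $l_1,\dots,l_r$ of $\bt$ with $\Ht(l_i)=k$). Since $\bt$ has $k$ levels and $|\bt|=:m$ vertices, the weight of the ``core'' $\bt$ under the Galton-Watson measure is $\prod_{u\in\bt,\ \Ht(u)<k}\xi(d_\bt(u))$, and then independent $\xi$-Galton-Watson trees $T^{(1)},\dots,T^{(r)}$ are attached at $l_1,\dots,l_r$, subject to $\sum_i \#T^{(i)} = n - m + r$ (the $+r$ because the $l_i$ are counted both in $\bt$ and as roots of the $T^{(i)}$). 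Hence
\[
\P\!\left(T^{[k]}=\bt,\ \#T=n\right) = \Big(\!\!\prod_{\substack{u\in\bt\\ \Ht(u)<k}}\!\!\xi(d_\bt(u))\Big)\,\P\!\Big(\textstyle\sum_{i=1}^r \#T^{(i)} = n-m+r\Big).
\]
Dividing by $\P(\#T=n)$, the limit as $n\to\infty$ along the support becomes a ratio of ``aggregated'' tree-size probabilities, and the standard fact to invoke here is the ratio-limit / renewal-type asymptotic for the partition function of a critical Galton-Watson tree: $\P(\#(T^{(1)}\sqcup\cdots\sqcup T^{(r)})=N)/\P(\#T=N) \to r$ as $N\to\infty$ along the relevant lattice (because $r$ independent copies of $T$ have total size $N$ roughly $r$ times as often as one copy has size $N$, the classical $r$-fold convolution local-limit estimate; this is exactly the mechanism in Kesten's construction and in \cite[Chapter 12]{LyonsPeres}). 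Therefore $\P(T_n^{[k]}=\bt) \to r\prod_{u\in\bt,\Ht(u)<k}\xi(d_\bt(u))$.

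Finally I would check this equals $\P(\tilde T^{[k]}=\bt)$ directly from the definition of $\nu_\xi$ given in the excerpt: $\nu_\xi(\{\bt'\in\fT : (\bt')^{[k]}=\bt\}) = (\#_k\bt)\,\P(T^{[k]}=\bt)$, and $\#_k\bt = r$ while $\P(T^{[k]}=\bt) = \prod_{u\in\bt,\Ht(u)<k}\xi(d_\bt(u))$ by the same fringe decomposition as above (now with no size constraint, so the GW trees attached at the $l_i$ contribute probability $1$). The two expressions match, giving the claim; alternatively, one can bypass the ratio-limit theorem by quoting Lemma \ref{lemma sizebp} together with the sequential (spine) construction of $\tilde T$ to identify the level-$k$ truncation law, but the size-based argument above is the cleanest route to matching the conditioned law. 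The main obstacle is the ratio-limit step $\P(\#\bigsqcup_{i=1}^r T^{(i)} = N)/\P(\#T=N)\to r$: it requires the aperiodicity/lattice bookkeeping for $\xi$ (restricting $N$ to the correct residue class, which is automatically handled by the ``along $n$ such that $\P(\#T=n)>0$'' proviso) and the local central limit estimate for heavy-tailed or finite-variance $\xi$ uniformly enough to pass to the limit; everything else is bookkeeping with the fringe decomposition and the definition of $\nu_\xi$.
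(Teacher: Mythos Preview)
The paper does not give its own proof of this lemma: immediately after the statement it records that the result ``has a long history,'' cites \cite{K75}, \cite{AP98}, \cite{Kesten86}, \cite{J12}, and \cite{AD14}, and moves on. So there is nothing to compare against; the authors treat it as a black-box input.

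Your plan is the standard argument one finds in those references (essentially the proof in \cite{AP98} or \cite[Section~7]{J12}), and it is correct. A couple of small comments. First, the ratio-limit step is usually phrased via the Otter--Dwass/cycle-lemma identity $\P(\#T^{(1)}+\cdots+\#T^{(r)}=N)=\tfrac{r}{N}\P(S_N=-r)$, where $S_N$ is the random walk with step distribution $\xi(\cdot)-1$; the ratio to $\P(\#T=N)=\tfrac{1}{N}\P(S_N=-1)$ then reduces to $r\cdot \P(S_N=-r)/\P(S_N=-1)\to r$, which follows from a local limit theorem for $S_N$ (Gnedenko's local CLT in the finite-variance case, the stable local limit theorem otherwise). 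This makes explicit where the aperiodicity bookkeeping and the ``along $n$ with $\P(\#T=n)>0$'' proviso enter. Second, the degenerate case $\#_k\bt=0$ (i.e.\ $\Ht(\bt)<k$) is even simpler than you indicate: then $\{T^{[k]}=\bt\}=\{T=\bt\}$, so $\P(T_n^{[k]}=\bt)=\ind\{n=|\bt|\}\to 0$, matching $\P(\tilde T^{[k]}=\bt)=(\#_k\bt)\P(T^{[k]}=\bt)=0$.
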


This lemma has a long history.  The case when $\xi$ has finite variance, which is the only case we will need, was proven implicitly in \cite{K75} and first stated explicitly in \cite{AP98}.  A version for conditioning on $\#T\geq n$ rather than $\#T = n$ was given in \cite{Kesten86}.  See \cite[Theorem 7.1]{J12} and \cite{AD14} for modern approaches to the general result.

\section{Bijections between rooted plane trees and pattern avoiding permutations}\label{treebijection}
Let $\fT_f\subseteq \fT$ be the set of finite trees with at least two vertices and let $\fT_{n+1}\subseteq\fT_f$ denote the set of rooted plane trees with $n+1$ vertices.  In this section we introduce bijections from $\fT_{n+1}$ to $\avn{\sigma}$ for $\sigma \in S_3$.

\subsection{321/123-avoiding Permutations}
We will describe a bijection between $\fT_{n+1}$ and $\avn{\bf 321}$ that maps the leaves of the tree to the left-to-right maxima of the corresponding permutation.  This bijection was previously used in \cite{HRS2}.

Suppose $\bt$ has $k$ leaves and let $\ell(\bt) = (l_1,l_2,\cdots,l_k)$ be the list of leaves in $\bt$, listed in lexicographic order.  There are two quantities associated with the leaves of the tree needed to construct the bijection.  Let $s_i=s_{\bt,i} = |\{v\in\bt : v<l_i\}|$ equal the number of vertices in $\bt$ less than $l_i$ in the lexicographic order.  Let $p_i=p_{\bt,i} = |l_i|$ denote the height of $l_i$.  

Construct two increasing sequences of length $k$, $A = \{s_i\}_{i=1}^k$ and $B = \{ s_i - p_i + 1\}_{i=1}^k.$  We define $\Phi^{\bf 321}_\bt$ pointwise on $B$
$$\Phi^{\bf 321}_\bt( s_i-p_i + 1 ) = s_i$$ for $1\leq i \leq k$.  We then extend $\Phi^{\bf 321}_\bt$ to $[n]\backslash B$ by assigning values of $[n]\backslash A$ in increasing order.  

The resulting function $\Phi^{\bf 321}_\bt$ is a permutation in $\avn{321}$.  The set of values for $s_i$ and $p_i$ uniquely determine a rooted plane tree, so we may recover the corresponding rooted tree from the left-to-right maxima of a permutation in $\Phi^{\bf 321}_\bt$.  Let $M=\{m_i\}_{i=1}^k$ denote the set of indices of the left-to-right maxima of $\Phi^{\bf 321}_\bt$. Then
$$s_i = \Phi^{\bf 321}_\bt(m_i)$$
and 
$$p_i = \Phi^{\bf 321}_\bt(m_i) - m_i + 1.$$

Note that if $\pi \mapsto n+1-\pi$ is a bijection from $\avn{{\bf 321}}$ to $\avn{{\bf 123}}$.  This motivates defining $\Phi^{\bf 123}_\bt = |\bt|+1-  \Phi^{\bf 321}_\bt$.  This discussion yields the following result.

\begin{theorem}\label{thm bij1} 
 For $\sigma\in \{ {\bf 123}, {\bf 321}\}$, $\bt\mapsto\Phi^{\sigma}_\bt$ is a bijection from $\fT_{n+1}$ to $\avn{{\sigma}}$.
\end{theorem}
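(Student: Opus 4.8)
The plan is to verify that $\bt \mapsto \Phi^{\bf 321}_\bt$ is a well-defined map $\fT_{n+1} \to \avn{\bf 321}$, exhibit an explicit inverse, and then transfer the result to $\bf 123$ by composing with the order-reversing involution $\pi \mapsto (n+1) - \pi$. Since the ${\bf 123}$ case is an immediate corollary of the ${\bf 321}$ case — the map $\pi \mapsto (n+1)-\pi$ is clearly an involution on $S_n$ carrying $\avn{\bf 321}$ bijectively onto $\avn{\bf 123}$, and $\Phi^{\bf 123}_\bt = |\bt| + 1 - \Phi^{\bf 321}_\bt$ by definition — the entire content is in the ${\bf 321}$ statement, so I focus there.

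First I would check well-definedness: that $\Phi^{\bf 321}_\bt$ really is a permutation of $[n]$. The key observation is that $A = \{s_i\}$ and $B = \{s_i - p_i + 1\}$ are each strictly increasing lists of length $k$ inside $[n]$ (strict monotonicity of $A$ is clear since leaves are listed in lex order; for $B$ one needs that $s_{i+1} - p_{i+1} \ge s_i - p_i + 1$, which follows from the tree structure — moving from leaf $l_i$ to $l_{i+1}$ in lex order, the quantity $s_i - p_i$ counts, roughly, the number of non-ancestor vertices weakly preceding $l_i$, and this is nondecreasing, in fact strictly increasing by at least the new leaf). Given this, $\Phi^{\bf 321}_\bt$ is defined to be an order-preserving bijection $B \to A$ together with an order-preserving bijection $[n]\setminus B \to [n]\setminus A$, hence a bijection $[n]\to[n]$. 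Next I would show the image avoids ${\bf 321}$: by construction the positions in $B$ are exactly the left-to-right maxima (each value $s_i$ placed at position $s_i - p_i + 1$ exceeds all earlier values, and the filler values on $[n]\setminus B$ are placed increasingly into the remaining slots so each is below the running maximum), and a permutation whose complement of left-to-right-maxima positions carries an increasing subsequence cannot contain ${\bf 321}$ — any ${\bf 321}$ pattern would need two of its three entries among the non-maxima, contradicting that those form an increasing subsequence, or it would need a descent among left-to-right maxima, which is impossible.

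Then I would construct the inverse. Given $\pi \in \avn{\bf 321}$ with left-to-right maxima at positions $m_1 < \dots < m_k$, set $s_i = \pi(m_i)$ and $p_i = \pi(m_i) - m_i + 1$; I would check $p_i \ge 1$ (since $m_i \le \pi(m_i)$ as $\pi(m_i)$ is a left-to-right max, it dominates the $m_i$ distinct values in positions $1,\dots,m_i$) and that the sequence $(s_i, p_i)$ satisfies exactly the constraints characterizing leaf data of a rooted plane tree on $n+1$ vertices — namely $s_1 = p_1$ (the first leaf, read in lex order, has $p_1$ ancestors and no earlier vertices, and $s_1$ must equal $p_1$), $s_i$ strictly increasing, $s_i - p_i$ weakly increasing, $s_k = n$ or $p_k$ constrained appropriately, etc. The crux is showing that this (s,p)-data determines a unique tree in $\fT_{n+1}$: I would argue that a rooted plane tree is reconstructed from the list of its leaves' depths together with the count of vertices preceding each leaf — reading left to right, between consecutive leaves $l_i$ and $l_{i+1}$ one goes up from $l_i$ by a determined number of steps (governed by the change in $s$ and $p$) and down along a fresh branch of determined length, and local finiteness plus the plane-tree axioms force uniqueness. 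Finally I would verify the two compositions are identities: starting from $\bt$, the left-to-right maxima of $\Phi^{\bf 321}_\bt$ are at positions $s_i - p_i + 1$ with values $s_i$, so the recovered data is $(s_i, p_i)$ and the recovered tree is $\bt$; conversely, starting from $\pi$ and building $\bt$ from its $(s_i,p_i)$, the permutation $\Phi^{\bf 321}_\bt$ places $s_i$ at $s_i-p_i+1 = m_i$ and fills the rest increasingly, which by the ${\bf 321}$-avoidance of $\pi$ must reproduce $\pi$ on non-maxima positions too (the non-maxima values and positions of any ${\bf 321}$-avoider are both forced to be increasing, hence matched order-isomorphically).

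The main obstacle is the reconstruction step — proving that an increasing sequence $(s_i)$ together with depths $(p_i)$ satisfying the derived inequalities corresponds to exactly one rooted plane tree with $n+1$ vertices, and dually that the constraints extracted from an arbitrary ${\bf 321}$-avoider via its left-to-right maxima are precisely the tree-admissible ones. This is a careful but elementary combinatorial bijection argument; once the precise form of the admissibility conditions is pinned down, both directions follow by induction on the number of leaves, tracking the "walk up then down" encoding of the contour of a plane tree. Everything else — well-definedness, membership in $\avn{\bf 321}$, and the transfer to $\bf 123$ — is routine given that setup.
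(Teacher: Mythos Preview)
Your proposal is correct and follows the same approach the paper sketches: the paper gives no formal proof beyond the discussion preceding the theorem (asserting that the $(s_i,p_i)$ data uniquely determine the tree, that the left-to-right maxima of $\Phi^{\bf 321}_\bt$ recover this data, and that $\pi\mapsto n+1-\pi$ handles ${\bf 123}$) together with a citation to \cite{HRS2}. Your outline fills in precisely these omitted details, and your identification of the tree-reconstruction step as the main content is accurate.
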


\subsection{231/213/312/132-avoiding Permutations}
In this section we introduce four related bijections from $\fT_{n+1}$ to $\avn{\bf 231}$, $\avn{\bf 213}$. $\avn{\bf 312}$, and $\avn{\bf 132}$. In the ${\bf 231}$-avoiding case this bijection first appeared in \cite{hoffman2017pattern}.  The remaining cases come from the natural symmetries relating permutations avoiding these patterns (e.g if $\pi \in \avn{\bf 231}$ then $i\mapsto n+1-\pi(i) \in \avn{\bf 213}$). For $\bt\in \fT_{n+1}$, let $v_0,v_1,\dots,v_n$ be the vertices of $\bt$ listed in lexicographic order (so that $v_0=\emptyset$ is the root) and let $w_i = v_{n+1-i}$.  We define $\Phi^{\bf 231}_{\bt}\in \avn{\bf 231}$ by  
\begin{equation}\label{eq 231bijection} \Phi^{\bf 231}_{\bt}(i) = i + |\bt_{v_i}| - |v_i|, \quad i=1,2,\dots, |\bt|,\end{equation}
where $|\bt|$ is the number of vertices of $\bt$. We define $\Phi^{\bf 213}_{\bt}(i) \in \avn{\bf 213}$ by
\begin{equation}\label{eq 213bijection} \Phi^{\bf 213}_{\bt}(i) = |\bt\setminus\bt_{v_i}| -i+ |v_i|, \quad i=1,2,\dots, |\bt|.\end{equation}
We define $\Phi^{\bf 312}_\bt(i) \in \avn{\bf 312}$ by
\begin{equation}\label{eq 312bijection} \Phi^{\bf 312}_\bt(i) = i-|\bt_{w_i}|+|w_i|, \quad i=1,2,\dots, |\bt|.\end{equation}
We define $\Phi^{\bf 132}_\bt \in \avn{\bf 132}$
\begin{equation}\label{eq 132bijection} \Phi^{\bf 132}_\bt(i) = |\bt|-i + |\bt_{w_i}| - |w_i|, \quad i=1,2,\dots, |\bt|.\end{equation}
\begin{theorem} \label{thm bij2} 
For $\sigma \in \{ {\bf 231}, {\bf 213}, {\bf 312}, {\bf 132}\}$ the map $\bt\mapsto\Phi^{\sigma}_{\bt}$ is a bijection from $\fT_{n+1}$ to $\avn{\sigma}$.
\end{theorem}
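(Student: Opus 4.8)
The plan is to establish the ${\bf 231}$ case carefully and then deduce the other three cases from the symmetries of $S_3$ acting on pattern-avoiding classes. For the ${\bf 231}$ case, I would first check that $\Phi^{\bf 231}_\bt$ is well-defined as a function $[n]\to[n]$ where $n=|\bt|$: since $v_i$ has $|\bt_{v_i}|-1$ proper descendants and lies at height $|v_i|$, the quantity $i+|\bt_{v_i}|-|v_i|$ should be seen to land in $\{1,\dots,n\}$. The most efficient route is a direct combinatorial description: traversing the vertices $v_1,\dots,v_n$ in lexicographic (depth-first) order, the fringe subtree $\bt_{v_i}$ occupies a contiguous block $\{i,i+1,\dots,i+|\bt_{v_i}|-1\}$ of indices in that order, and $|v_i|$ counts the vertices of $\bt_{v_i}$ that are ancestors-plus-self, i.e. on the path from the root. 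I would make precise the claim that $\Phi^{\bf 231}_\bt(i)$ is exactly the index, in lexicographic order, of the last descendant of $v_i$ shifted by the depth, and argue this defines a bijection onto $[n]$ by exhibiting the inverse: given $\pi\in\avn{\bf 231}$, one reconstructs $\bt$ by reading off, for each $i$, the size of the maximal block and reassembling the nesting structure (this is the standard correspondence between $231$-avoiding permutations and binary-like tree structures, and I would cite \cite{hoffman2017pattern} for the fact that this particular map is the bijection from that paper).

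The cleanest way to organize the bijectivity proof is to induct on $|\bt|$ using the decomposition of a rooted plane tree into its root together with the ordered list of root subtrees, or equivalently the decomposition of a $231$-avoiding permutation at the position of its maximum value $n$: if $\pi(k)=n$ then $\pi$ restricted to $\{1,\dots,k-1\}$ and to $\{k+1,\dots,n\}$ must themselves avoid ${\bf 231}$ and (for $231$-avoidance) everything before position $k$ must exceed everything after. I would match this recursive structure on the permutation side with the recursive structure on the tree side and verify that $\Phi^{\bf 231}$ intertwines them, which simultaneously proves $\Phi^{\bf 231}_\bt\in\avn{\bf 231}$ and that the map is a bijection (both sides are counted by Catalan numbers, so injectivity plus well-definedness suffices, but the recursion gives injectivity transparently).

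For the remaining three patterns I would use the fact, already asserted in the text, that the relevant symmetries of the square (reverse, complement, inverse) send $\avn{\bf 231}$ bijectively onto $\avn{\bf 213}$, $\avn{\bf 312}$, and $\avn{\bf 132}$. Concretely, reversal $i\mapsto n+1-i$ in the domain composed with $\Phi^{\bf 231}$ should produce $\Phi^{\bf 312}$ after replacing $v_i$ by $w_i=v_{n+1-i}$, which explains the formula \eqref{eq 312bijection}; complementation $\pi\mapsto n+1-\pi$ in the codomain turns $\Phi^{\bf 231}$ into $\Phi^{\bf 213}$, matching \eqref{eq 213bijection} since $|\bt|-(i+|\bt_{v_i}|-|v_i|)+1 = |\bt\setminus\bt_{v_i}|-i+|v_i|$ using $|\bt\setminus\bt_{v_i}| = |\bt|-|\bt_{v_i}|$; and the composition of both symmetries yields $\Phi^{\bf 132}$, matching \eqref{eq 132bijection}. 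So the bulk of the proof is: verify each of these three algebraic identities relating the displayed formulas to a symmetry applied to $\Phi^{\bf 231}$, then invoke that a bijection composed with a bijection is a bijection.

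The main obstacle I anticipate is the base case, i.e. giving a fully rigorous proof that $\Phi^{\bf 231}$ itself is a bijection rather than leaning entirely on the citation to \cite{hoffman2017pattern}; in particular one must pin down precisely why the values $\{i+|\bt_{v_i}|-|v_i| : i\in[n]\}$ are distinct and exhaust $[n]$, which amounts to showing the ``descendant-block'' intervals interact with the depth correction in exactly the way that produces a permutation. This is where I would spend the most care, probably by proving the explicit claim that $\Phi^{\bf 231}_\bt(i)$ equals the position in lexicographic order of the rightmost leaf-or-vertex in $\bt_{v_i}$ measured from a shifted origin, and then checking that as $v_i$ ranges over all vertices these positions are a permutation of $[n]$ by a counting/injectivity argument on the tree. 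Once that anchor is in place, the three symmetry arguments are routine bookkeeping with the index substitution $w_i = v_{n+1-i}$ and the identity $|\bt_{v_i}|+|\bt\setminus\bt_{v_i}| = |\bt|$.
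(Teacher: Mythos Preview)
Your plan matches what the paper actually does: the paper gives no self-contained proof of this theorem but simply cites \cite{hoffman2017pattern} for the ${\bf 231}$ case and asserts that the other three formulas arise from the standard symmetries. So the overall strategy---anchor on ${\bf 231}$, then transport by reverse and complement---is exactly the intended one, and your inductive sketch for the ${\bf 231}$ anchor is more than the paper itself provides.

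There are, however, two concrete bookkeeping slips in your symmetry reduction that you should fix before it goes through. First, you have the roles of ${\bf 312}$ and ${\bf 132}$ interchanged. With $|\bt|=n+1$ and $w_i=v_{n+1-i}$, a direct substitution gives
\[
\Phi^{\bf 132}_\bt(i) \;=\; (n+1)-i+|\bt_{v_{n+1-i}}|-|v_{n+1-i}| \;=\; \Phi^{\bf 231}_\bt(n+1-i),
\]
so $\Phi^{\bf 132}$ is the \emph{reversal} of $\Phi^{\bf 231}$ (consistent with the fact that the reverse of the pattern $231$ is $132$), while
\[
\Phi^{\bf 312}_\bt(i) \;=\; i-|\bt_{v_{n+1-i}}|+|v_{n+1-i}| \;=\; (n+1)-\Phi^{\bf 231}_\bt(n+1-i),
\]
so $\Phi^{\bf 312}$ is the reverse-complement of $\Phi^{\bf 231}$, not the reversal. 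Second, you write $n=|\bt|$, but in the paper $\bt\in\fT_{n+1}$, so $|\bt|=n+1$ and the complement on $[n]$ is $\pi\mapsto (n+1)-\pi = |\bt|-\pi$. Your displayed identity for the ${\bf 213}$ case should therefore read
\[
|\bt|-\bigl(i+|\bt_{v_i}|-|v_i|\bigr) \;=\; |\bt\setminus\bt_{v_i}|-i+|v_i|,
\]
without the extra ``$+1$''. Once these two corrections are in place the symmetry reductions are valid and the argument is complete.
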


\subsection{Extension to Infinite Trees}
In this section we show that the bijections above can be extended in a natural way to be defined on certain infinite trees.  An infinite spine in $\bt$ is an infinite sequence $\eta_0,\eta_1,\dots$ of vertices in $\bt$ such that $\eta_0=\emptyset$ and $\eta_{i+1}$ is a child of $\eta_i$ for all $i\geq 0$.  Let $\fT_\infty\subseteq \fT$ be the set of locally finite infinite trees that have a unique infinite spine, have infinitely many trees attached to the left and right of the infinite spine, and such that infinitely many of these trees to the left of the infinite spine are in $\fT_f$.  Note that because $\bt\in \fT_\infty$ has a single infinite spine, the subtrees branching off from the infinite spine are finite.  Such a subtree is either in $\fT_f$ or it consists of a single vertex (in which case it becomes a leaf attached to the spine).  We will show that the formulas used to define the bijections in the previous section can be extended to $\fT_\infty$, though we must allow some of them to take infinite values and they may no longer be injective. 

It may not be possible to list the vertices of an infinite tree in increasing lexicographic order.  However, for any $\bt\in \fT$ we can still consider the infinite chain of vertices $(v_i)_{i<|\bt|}=(v_{\bt,i})_{i<|\bt|}$ defined by $v_{\bt,0}=\emptyset$ and 
\[v_i=v_{\bt, i} = \min\{v \in \bt : v\notin \{v_{\bt,0},v_{\bt,1},\dots,v_{\bt,i-1}\}\}\]
for $1\leq i<|\bt|$.  Note that if that if $\bt$ is finite then $(v_{\bt,i})_{i<|\bt|}$ is the list of vertices in $\bt$ in increasing lexicographic order.  

For $\bt\in \fT_\infty$, note that because there are infinitely many trees branching off to the right of the unique infinite spine, $\bt$ contains a largest vertex in the lexicographic order.  Indeed we can define $w_0=w_{\bt,0} = \max\{v \in \bt\}$ and for $i \geq 1$,
\[ w_i=w_{\bt,i} =   \max\{w\in \bt : w\notin \{w_{\bt,0},w_{\bt,1},\dots,w_{\bt,i-1}\}\}.\]
Note that none of the $w_{\bt,i}$ are on the infinite spine of $\bt$ because each vertex on the spine is smaller than the vertices in trees branching off to the right of the spine and there are infinitely many such vertices.


\begin{definition}
For $\bt \in \fT_\infty$, we define functions $\Phi^{\bf 231}_{\bt}, \Phi^{\bf 213}_{\bt}, \Phi^{\bf 312}_\bt, \Phi^{\bf 132}_\bt \in F(\N,\N^*)$ by 
\begin{enumerate}
\item $\Phi^{\bf 231}_{\bt}(i) = i + |\bt_{v_i}| - |v_i|$, \vskip .2cm
\item $\Phi^{\bf 213}_{\bt}(i) = |\bt\setminus\bt_{v_i}| -i+ |v_i|$, \vskip .2cm
\item $\Phi^{\bf 312}_\bt(i) = i-|\bt_{w_i}|+|w_i|$, \vskip .2cm
\item $\Phi^{\bf 132}_\bt(i) = |\bt|-i + |\bt_{w_i}| - |w_i|$, 
\end{enumerate}
where we use the convention that $a+\infty=\infty$ for $a\in \Z \cup \{\infty\}$.
\end{definition}
Note that $-\infty$ never enters into the formulas in the above definition, so we do not need to worry about expressions like $\infty-\infty$.  Also, note that $\Phi^{\bf 132}_\bt(i)=\infty$ for all $i$ since $|\bt|=\infty$.  We chose the expression in the definition to make it clear that this definition is consistent with the case when $\bt \in \fT_f$.

The extension of $\Phi^{\bf 321}_{\bt}$ to $\fT_\infty$ is more delicate since it is not obvious that the procedure we used on finite trees is well defined.  The following proposition shows that, in fact, the procedure is well defined.  We first extent the notation to $\fT_\infty$.  For $\bt \in\fT_\infty$, let $(l_1, l_2,\dots)$ be a list of the leaves to the left of the infinite spine of $\bt$ in increasing lexicographic order.  As before we define $s_i=s_{\bt,i} = |\{v\in\bt : v<l_i\}|$ and $p_i=p_{\bt,i}=|l_i|$.

\begin{proposition}\label{prop bij}
If $\bt\in \fT_\infty$ there is a unique increasing bijection from $\{s_i-p_i+1: i\geq 1\}^c$ to $\{s_i : i\geq 1\}^c$.
\end{proposition}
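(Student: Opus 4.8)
The plan is to show that both complements $\{s_i-p_i+1:i\geq 1\}^c$ and $\{s_i:i\geq 1\}^c$ are infinite subsets of $\N$, and that the unique increasing bijection between them — which exists purely set-theoretically for any two infinite subsets of $\N$ — is well defined, meaning every positive integer outside $B=\{s_i-p_i+1:i\geq 1\}$ has a well-defined ``rank'' and gets sent somewhere. The only genuine content is therefore the claim that both complements are infinite (equivalently, that the increasing enumeration doesn't run out); uniqueness is automatic once both sets are known to be infinite, since an increasing bijection between infinite subsets of $\N$ must send the $j$th element of the domain to the $j$th element of the range.

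First I would record the basic structural facts about $\bt\in\fT_\infty$ coming from the paragraphs preceding the proposition: $\bt$ has a unique infinite spine, infinitely many finite subtrees hang to the left of it, and the leaves to the left of the spine, listed lexicographically as $(l_1,l_2,\dots)$, form an infinite list. For each such leaf I have $s_i=|\{v\in\bt:v<l_i\}|$ and $p_i=|l_i|\geq 1$. A first observation is that the $s_i$ are strictly increasing (later leaves have strictly more vertices below them), so $A=\{s_i:i\geq1\}$ is genuinely an infinite increasing sequence, and likewise $B=\{s_i-p_i+1:i\geq1\}$ is increasing — this last point needs the small lemma that $s_{i+1}-p_{i+1}\geq s_i-p_i+1$, which I would verify by the same counting argument used for finite trees (the quantity $s_i-p_i+1$ counts, roughly, vertices that are not strict ancestors of $l_i$ and precede it, and moving from $l_i$ to $l_{i+1}$ strictly increases this).

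Next, the heart of the argument: $A^c$ and $B^c$ are both infinite. For $A^c$, note that between consecutive leaves $l_i$ and $l_{i+1}$ to the left of the spine there is always at least one vertex of $\bt$ lying on the spine (or at least one vertex that is not itself a leaf-to-the-left-of-the-spine), so $s_{i+1}>s_i+1$ infinitely often — in fact the spine alone contributes infinitely many indices missing from $A$. More carefully, each spine vertex $\eta_j$ satisfies $\eta_j<l_i$ for all $i$ large enough (spine vertices precede the leaves of subtrees hanging to their right), and a spine vertex is never equal to any $s_i$; chasing this gives infinitely many integers in $A^c$. For $B^c$: here I would argue that since infinitely many subtrees to the left of the spine have at least two vertices (they lie in $\fT_f$), the heights $p_i$ are unbounded along a subsequence, or more robustly, that the "gaps" $s_{i+1}-p_{i+1}-(s_i-p_i)$ are strictly positive and in fact $\geq 2$ infinitely often, forcing $B^c$ to be infinite. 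This is the step I expect to be the main obstacle — one must translate the geometric hypothesis on $\fT_\infty$ (infinitely many nontrivial finite subtrees to the left of the spine) into a concrete statement about the arithmetic progressions of $s_i-p_i+1$, and be careful that "infinitely many leaves to the left" is the right index set and that nothing degenerates.

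Finally, with $A^c$ and $B^c$ both established to be infinite subsets of $\N$, I conclude: let $b_1<b_2<\cdots$ enumerate $B^c$ and $a_1<a_2<\cdots$ enumerate $A^c$, and define the bijection by $b_j\mapsto a_j$. This is increasing by construction, it is a bijection onto $A^c$, and any increasing bijection $\phi:B^c\to A^c$ must satisfy $\phi(b_j)=a_j$ for all $j$ (by induction on $j$: $\phi(b_1)$ is the minimum of $A^c$, and given $\phi(b_1),\dots,\phi(b_{j-1})$ are the $j-1$ smallest elements of $A^c$, $\phi(b_j)$ must be the next one), giving uniqueness. I would close by remarking that this makes $\Phi^{\bf 321}_\bt(s_i-p_i+1)=s_i$ together with the extension-by-rank prescription a well-defined element of $F(\N,\N^*)$, consistent with the finite-tree definition, which is exactly what is needed for the subsequent development.
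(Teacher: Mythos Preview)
Your proposal is correct and follows essentially the same route as the paper: reduce to showing that both $\{s_i:i\geq 1\}^c$ and $\{s_i-p_i+1:i\geq 1\}^c$ are infinite, then invoke the trivial fact that two infinite subsets of $\N$ admit a unique increasing bijection. You correctly identify that the $B^c$ case is the crux and that it hinges on the hypothesis that infinitely many subtrees to the left of the spine lie in $\fT_f$ (so have at least two vertices), which is exactly how the paper proceeds --- it exhibits, for each such subtree, an index $i^*$ (the rightmost leaf of that subtree) at which the gap $s_{i^*+1}-p_{i^*+1}-(s_{i^*}-p_{i^*})\geq 2$, via a short distance computation. Your argument for $A^c$ via ``spine vertices are never leaves, hence their lexicographic indices miss $\{s_i\}$'' is a slightly cleaner phrasing than the paper's gap argument but amounts to the same observation.
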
 

\begin{proof}
Let $(l_1, l_2,\dots)$ be a list of the leaves to the left of the infinite spine of $\bt$ in increasing lexicographic order.  Note that this list is infinite because we assume that there are infinitely many trees attached to the left of the spine of $\bt$. It is sufficient to show that $\{s_i : i\geq 1\}^c $ and $\{s_i-p_i+1: i\geq 1\}^c$ are infinite, from which it follows that there is a unique increasing bijection between these sets.  Since $\{s_i-p_i+1\}_{i\geq 1}$ is an increasing sequence, to show $|\{s_i-p_i+1: i\geq 1\}^c|=\infty$, it suffices to find infinitely many $i$ such that $s_{i+1}-p_{i+1}+1 > s_{i}-p_{i}+2$, since then $s_{i}-p_{i}+2 \notin \{s_i-p_i+1: i\geq 1\}$. 
Let 
\[I = \{ i: s_{i+1}-p_{i+1}+1 > s_{i}-p_{i}+2\}.\] 

Let $\eta_0,\eta_1,\eta_2,\dots$ be the vertices on the infinite spine of $\bt$, in increasing lexicographic order,  Suppose that $\bs$ is a tree in $\fT_f$, is the $i$'th subtree from the left attached to $\eta_k$, and is to the left of $\eta_{k+1}$.  Since $\bs\in \fT_f$, it contains at least two vertices.  Thus 
\[\max_{v\in \eta_ki\bs} d(v,\eta_k) \geq 2.\]  
Let $i^*$ be such that $l_{i^*}$ is the right-most leaf of $\bt$ in $\eta_ki\bs$.  Note that $d(l_{i^*},\eta_k) \geq 2$.  Since $l_{i^*}$ is the right-most leaf of $\bt$ in $\eta_ki\bs$, $\eta_k$ is on the path from $l_{i^*}$ to $l_{i^*+1}$.
Consequently
\[ \begin{split} s_{i^*+1} -p_{i^*+1} +1 & =  s_{i^*}+d(\eta_{k},l_{i^*+1}) - (p_{i^*}- d(\eta_{k},l_{i^*})+ d(\eta_k,l_{i^*+1}))+1\\
& = s_{i^*}-p_{i^*}+1 + d(\eta_{k},l_{i^*})\\
& \geq s_{i^*}-p_{i^*}+1 + 2 
.\end{split}\]
Therefore $i^*\in I$, so $|I|=\infty$ since infinitely many of the trees attached to the left of the spine of $\bt$ are in $\fT_f$, and it follows that $|\{s_i-p_i+1: i\geq 1\}^c|=\infty$.  Similarly, if $j^*$ is such that the path from $l_{j^*}$ to $l_{j^*+1}$ contains an edge on the infinite spine, then $s_{j^*+1} \geq s_{j^*}+2$, so $| \{s_i\}_{i\geq 1}|^c =\infty$.
\end{proof}

As a consequence of this Proposition, we can make the following definition.

\begin{definition}
For $\bt\in\fT$, define $\Phi^{\bf 321}_{\bt} \in F(\N,\N^*)$ by 
\[ \Phi^{\bf 321}_\bt( s_i-p_i + 1 ) = s_i \]
and extend $\Phi^{\bf 321}_{\bt}$ to $\N \setminus \{s_i-p_i + 1: i\geq 1\}$ using the unique increasing bijection from this set to  $\N \setminus \{s_i: i\geq 1\}$. Additionally, define $\Phi_{\bt}^{\bf 123} = |\bt|+1- \Phi_\bt^{\bf 321}$.
\end{definition}

Note that the increasing bijection used in this definition is exactly that whose existence in guaranteed by Proposition \ref{prop bij}.  Also note that $\Phi^{\bf 321}_t$ does not take infinite values and, in fact, is a bijection from $\N$ to $\N$.  Note also that $\Phi^{\bf 123}(k)=\infty$ for all $k$ and, again, we have given the definition that we have in order to emphasize that this definition is consistent with the definition for finite trees.

\section{Continuity of the bijections}
The central observations to proving continuity are that, as noted above, $\bt_n\to \bt$ if and only if for each $m\geq 1$, $\bt_{n}^{[m]} = \bt^{[m]}$ for all sufficiently large $n$ and that for each $k$ the tree statistics the go into determining the image of $k$ are either completely or approximately determined by  $\bt^{[m]}$.  The following results serve as important tools for translating this observation into careful arguments.

\begin{lemma}\label{lem reduction}
Suppose that $\bt \in \fT_\infty$ and $\bs\in \fT_f$.  If 
\[ m \geq 1+\max_{0\leq i\leq j} |v_{\bt,i}|.\]
and $\bs^{[m]} = \bt^{[m]}$ then 
\[ (v_{\bt,0}, v_{\bt,1},\dots,v_{\bt,j}) = (v_{\bs,0}, v_{\bs,1},\dots, v_{\bs,j}).\]
Similarly, if 
\[ m \geq 1+\max_{0\leq i\leq j} |w_{\bt,i}|.\]
and $\bs^{[m]} = \bt^{[m]}$ then 
\[ (w_{\bt,0}, w_{\bt,1},\dots,w_{\bt,j}) = (w_{\bs,0}, w_{\bs,1},\dots, w_{\bs,j}),\]
where recall that, for a finite tree, $w_{\bs,i}=v_{\bs,|\bs|+1-i}$.
\end{lemma}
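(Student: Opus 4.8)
The plan is to prove the first statement by induction on $j$, and then derive the second statement either by a parallel induction or by relating the $w$-enumeration to the $v$-enumeration via an order-reversing correspondence. The key point is that the enumeration $(v_{\bt,i})_{i}$ is defined purely by the greedy rule $v_{\bt,i}=\min\{v\in\bt: v\notin\{v_{\bt,0},\dots,v_{\bt,i-1}\}\}$, and this rule only inspects vertices of $\bt$ up to some bounded height once we know where the first $j+1$ of them sit. Concretely, suppose $m\geq 1+\max_{0\leq i\leq j}|v_{\bt,i}|$ and $\bs^{[m]}=\bt^{[m]}$. First I would observe that $v_{\bt,0}=\emptyset=v_{\bs,0}$, giving the base case $j=0$ (here $m\geq 1$ suffices). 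For the inductive step, assume $(v_{\bt,0},\dots,v_{\bt,i-1})=(v_{\bs,0},\dots,v_{\bs,i-1})$ for some $i\leq j$; I want to show $v_{\bt,i}=v_{\bs,i}$. Let $u=v_{\bt,i}$, so $|u|\leq m-1$, hence $u\in\bt^{[m]}=\bs^{[m]}\subseteq\bs$, and $u\notin\{v_{\bs,0},\dots,v_{\bs,i-1}\}$ by the induction hypothesis; thus $v_{\bs,i}\leq u$ in lexicographic order. Conversely, I must rule out the possibility that $\bs$ contains a vertex $u'< u$ with $u'\notin\{v_{\bs,0},\dots,v_{\bs,i-1}\}$; for this I need to know that any such $u'$ also has height at most $m-1$, so that $u'\in\bt^{[m]}$ as well, which would contradict minimality of $u$ in $\bt$. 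This is where the hypothesis does its work: since the first $i$ vertices in lexicographic order in $\bt$ all have height $\le m-1$ and $u$ itself has height $\le m-1$, any vertex lexicographically below $u$ that is not among $v_{\bt,0},\dots,v_{\bt,i-1}$ must be a prefix-comparable or sibling-type neighbor of $u$ and in particular has height bounded by $m-1$ as well — I would spell this out using that if $u'<u$ then either $u'$ is an ancestor of $u$ (height $<|u|\le m-1$) or $u'$ and $u$ first differ in some coordinate with $u'$ smaller there, and in the latter case $u'$ with all its ancestors would have been enumerated before $u$ if it were smaller than $u$, forcing a contradiction unless $u'$ itself lies in the already-enumerated set.

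The cleanest way to package the preceding paragraph is the standard fact about the lexicographic DFS order: $v_{\bt,i}$ is obtained from $v_{\bt,i-1}$ by the usual depth-first rule — go to the leftmost child if it exists, else backtrack to the nearest ancestor that has an unvisited next sibling and go there. Every vertex consulted in executing this rule from $v_{\bt,i-1}$ to $v_{\bt,i}$ is either an ancestor of $v_{\bt,i-1}$, an ancestor of $v_{\bt,i}$, or a child of such an ancestor, hence has height at most $1+\max(|v_{\bt,i-1}|,|v_{\bt,i}|)\leq m$, i.e. lies in $\bt^{[m]}$. Since $\bt^{[m]}=\bs^{[m]}$ and the rule is local to $\bt^{[m]}$, executing the same rule in $\bs$ produces the same vertex, giving $v_{\bs,i}=v_{\bt,i}$. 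I expect this locality-of-DFS argument to be the main obstacle — not because it is deep, but because it requires carefully articulating exactly which vertices the transition rule reads, and confirming the height bound $m$ (rather than $m-1$) is exactly what makes the argument close; the ``$+1$'' in the hypothesis is precisely accounting for looking one level below the enumerated vertices.

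For the second statement, I would note that for $\bt\in\fT_\infty$ the vertex $w_{\bt,0}=\max\{v\in\bt\}$ exists because infinitely many trees branch off to the right of the spine, and the $w$-enumeration is the greedy \emph{decreasing} selection. Running the symmetric induction: the base case is $w_{\bt,0}=w_{\bs,0}$, which follows since $\bs^{[m]}=\bt^{[m]}$ and $|w_{\bt,0}|\le m-1$ forces the maxima to agree (any vertex of $\bs$ exceeding $w_{\bt,0}$ in lex order that is not in $\bt^{[m]}$ would have to be ``far to the right,'' but the reverse-DFS transition rule again only consults vertices of height $\le 1+\max_{i}|w_{\bt,i}|\le m$). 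The inductive step is verbatim the same as before with ``$\min$'' replaced by ``$\max$'' and the DFS order reversed; alternatively one can invoke that $w_{\bt,i}$ is computed by the mirror-image traversal (rightmost child, else backtrack to nearest ancestor with an unvisited previous sibling), whose transition again reads only vertices within height $m$ of the root. In the finite-tree case the identity $w_{\bs,i}=v_{\bs,|\bs|+1-i}$ is immediate from both lists being the full vertex set in opposite lexicographic order, which is the compatibility remark at the end of the statement. I would present the first half in full and remark that the second is entirely analogous.
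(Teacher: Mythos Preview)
Your inductive scheme is exactly the paper's, and your DFS-locality argument in the second paragraph is a valid way to close the induction. The paper, however, handles the converse inequality $v_{\bt,k+1}\leq v_{\bs,k+1}$ more directly than either of your attempts: rather than analyzing which vertices the DFS transition rule consults, it simply observes that the \emph{parent} of $v_{\bs,k+1}$ is lexicographically smaller than $v_{\bs,k+1}$, hence already lies in $\{v_{\bs,0},\dots,v_{\bs,k}\}=\{v_{\bt,0},\dots,v_{\bt,k}\}$. This single observation immediately gives $|v_{\bs,k+1}|\leq 1+\max_{0\leq i\leq k}|v_{\bt,i}|\leq m$, so $v_{\bs,k+1}\in\bs^{[m]}=\bt^{[m]}$, and minimality of $v_{\bt,k+1}$ in $\bt$ finishes. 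Your first-paragraph attempt at this converse (``any such $u'$ must be a prefix-comparable or sibling-type neighbor of $u$'') is not correct as stated and would need repair; your DFS argument is correct but is really just an operational rephrasing of the parent observation. The paper's one-line version is what you should extract: in any rooted tree, the $(k{+}1)$st vertex in lexicographic order is a child of one of the first $k$.
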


This lemma says that the initial string of the lexicographically ordered vertices is determined by the portion of the tree below one more than the maximum height of these vertices.

\begin{proof}
Since $m \geq 1+\max_{0\leq i\leq j} |v_{\bt,i}|$ we see that $v_{\bt,0}, v_{\bt,1},\dots,v_{\bt,j} \in \bt^{[m]}$.  Since $\bs^{[m]} = \bt^{[m]}$, we see that $v_{\bt,0}, v_{\bt,1},\dots,v_{\bt,j} \in \bs^{[m]}$.  Note that $v_{\bt,0}=\emptyset=v_{\bs,0}$. We proceed to argue recursively.  Suppose $k<j$ is such that 
\[(v_{\bt,0}, v_{\bt,1},\dots,v_{\bt,k}) = (v_{\bs,0}, v_{\bs,1},\dots, v_{\bs,k}).\]
Since $v_{\bt,k+1}\in  \bs^{[m]}  \setminus \{v_{\bs,0}, v_{\bs,1},\dots,v_{\bs,k} \}$, we see that $ v_{\bs,k+1} \leq v_{\bt,k+1}$ (the $\leq$ being the lexicographic order).  From the definition of a tree, we see that $ v_{\bs,k+1}$ is adjacent to one of $v_{\bs,0}, v_{\bs,1},\dots, v_{\bs,k}$ and, consequently,
\[ |v_{\bs,k+1}| \leq 1+\max_{0\leq i\leq k} |v_{\bs,i}| =1+\max_{0\leq i\leq k} |v_{\bt,i}| \leq 1+\max_{0\leq i\leq j} |v_{\bt,i}| \leq m.\]
Thus $v_{\bs,k+1} \in \bs^{[m]} = \bt^{[m]}$.  Consequently $v_{\bs,k+1} \in   \bt^{[m]} \setminus \{ v_{\bt,0}, v_{\bt,1},\dots,v_{\bt,k} \}$, from which it follows that $v_{\bt,k+1} \leq v_{\bs,k+1}$.  Therefore $ v_{\bs,k+1} = v_{\bt,k+1}$, from which it follows recursively that 
\[ (v_{\bt,0}, v_{\bt,1},\dots,v_{\bt,j}) = (v_{\bs,0}, v_{\bs,1},\dots, v_{\bs,j}),\]
as desired.  The claim for the $w$'s follows by a similar argument.
\end{proof}

\begin{corollary}\label{cor fringe}
Suppose that $\bt_n\ \in \fT_f$, $\bt \in \fT_\infty$ and $\bt_n\to \bt$.  If $v\in \bt$ is not on the infinite spine of $\bt$ then for sufficiently large $n$, $v\in \bt_n$ and $(\bt_n)_v = \bt_v$, where recall that $(\bt_n)_v$ is the fringe subtree of $\bt_n$ at $v$.
\end{corollary}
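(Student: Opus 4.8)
The plan is to exploit the characterization of tree convergence already noted in the paper: $\bt_n\to\bt$ in $\fT$ if and only if for every $m\geq 1$ we have $\bt_n^{[m]}=\bt^{[m]}$ for all sufficiently large $n$. So I fix a vertex $v\in\bt$ that is not on the infinite spine of $\bt$, and I want to produce a threshold $N$ such that for $n\geq N$ both $v\in\bt_n$ and $(\bt_n)_v=\bt_v$.

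First I would observe that since $v$ is not on the infinite spine, $v$ lies in one of the finite subtrees branching off the spine; in particular the entire fringe subtree $\bt_v$ is finite, say contained in $\bt^{[m_0]}$ for some $m_0$. Concretely, let $h=|v|$ be the height of $v$ and let $d$ be the maximal height (within $\bt$) of a descendant of $v$; then $\bt_v\subseteq\bt^{[d]}$ and I can take $m=d+1$ (or any $m>d$). By the convergence hypothesis, choose $N$ so that $\bt_n^{[m]}=\bt^{[m]}$ for all $n\geq N$. Since $v\in\bt^{[m]}=\bt_n^{[m]}\subseteq\bt_n$, we get $v\in\bt_n$ for $n\geq N$, which is the first claim.

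For the second claim I need $(\bt_n)_v=\bt_v$. The inclusion $\bt_v\subseteq(\bt_n)_v$ is easy: every descendant of $v$ in $\bt$ has height at most $d<m$, hence lies in $\bt^{[m]}=\bt_n^{[m]}$, and being of the form $vw$ it is a descendant of $v$ in $\bt_n$ as well. For the reverse inclusion I must rule out that $\bt_n$ has ``extra'' descendants of $v$ beyond those in $\bt$. The key point is that any descendant $v'=vw$ of $v$ in $\bt_n$ is connected to $v$ by a path inside $(\bt_n)_v$; I want to argue by induction on $|w|$ that $v'\in\bt_v$. If $v'$ has height $\le d$ this is immediate from $\bt_n^{[m]}=\bt^{[m]}$ together with the fact that no new vertex of height $\le d$ that is a descendant of $v$ can appear (again because $\bt_n^{[m]}=\bt^{[m]}$). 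The real content is showing there is no descendant of $v$ in $\bt_n$ at height exactly $d+1$ (which would be outside $\bt^{[m]}=\bt^{[d+1]}$ and hence invisible to the comparison at level $m=d+1$). To handle this I would simply take $m$ larger — say large enough that $m>d+1$; but $d$ was defined using $\bt$, not $\bt_n$, so this does not directly close the gap. The honest fix is to note that $d_\bt(v')=d_{\bt_n}(v')$ whenever $v'\in\bt^{[m-1]}$, because by the convergence $d_{\bt_n}(u)\to d_\bt(u)$ for every $u\in\cU$; so for all large $n$, simultaneously for the finitely many vertices $v'\in\bt_v$, we have $d_{\bt_n}(v')=d_\bt(v')$. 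Since the leaves of $\bt_v$ (those $v'$ with $d_\bt(v')=0$) then also satisfy $d_{\bt_n}(v')=0$, they have no children in $\bt_n$, and so $(\bt_n)_v$ cannot grow below them. Combined with the level-$m$ agreement this forces $(\bt_n)_v=\bt_v$.

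So the clean argument is: enlarge $m$ so that $\bt^{[m]}\supseteq\bt_v$ strictly (e.g. $m=d+1$), use pointwise convergence $d_{\bt_n}(v')\to d_\bt(v')$ to pick $N$ so that for $n\geq N$ all finitely many $v'\in\bt_v$ have $d_{\bt_n}(v')=d_\bt(v')$; then $v\in\bt_n$, every vertex of $\bt_v$ is in $\bt_n$ with the same number of children, and in particular the leaves of $\bt_v$ are leaves of $\bt_n$, so $(\bt_n)_v$ agrees with $\bt_v$ on all of $\bt_v$ and has nothing beyond it. The main obstacle — and the one subtlety worth spelling out carefully — is exactly this ``no extra descendants'' point: comparing truncations $\bt_n^{[m]}=\bt^{[m]}$ alone leaves open the possibility of growth just past the truncation level, and the resolution is to invoke the pointwise degree convergence at the (finitely many) leaves of $\bt_v$ rather than relying solely on the truncation-equality reformulation.
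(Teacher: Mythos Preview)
Your argument is correct, and it takes a genuinely different route from the paper's proof. The paper argues via the lexicographic indexing of vertices: it writes $v=v_{\bt,k}$, finds the next spine vertex $v_{\bt,k^*}$ lexicographically above $v$, observes that $v_{\bt,k^*}$ is a lexicographic upper bound on all vertices of the Ulam--Harris tree of the form $vw$, and then invokes Lemma~\ref{lem reduction} to match the initial segments $(v_{\bt_n,0},\dots,v_{\bt_n,k^*})$ and $(v_{\bt,0},\dots,v_{\bt,k^*})$; the fringe subtree at $v$ is then read off as the set of descendants of $v$ among these finitely many vertices. Your approach instead works directly from the pointwise degree convergence $d_{\bt_n}(u)\to d_\bt(u)$: since $\bt_v$ is finite, you can force $d_{\bt_n}(v')=d_\bt(v')$ simultaneously for all $v'\in\bt_v$, and then a short induction on height shows $(\bt_n)_v=\bt_v$. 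This is more elementary and in fact uses neither the hypothesis $\bt_n\in\fT_f$ nor the full structure of $\fT_\infty$ (only that $\bt_v$ is finite). The paper's route, on the other hand, is tailored to the lexicographic machinery of Lemma~\ref{lem reduction}, which is the workhorse for the continuity arguments in Theorem~\ref{thm deterministic}; so while your proof of this corollary is cleaner in isolation, the paper's version keeps everything within a single framework. One stylistic note: your write-up would be tighter if you led with the ``clean argument'' rather than first exploring the truncation approach and then patching it.
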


\begin{proof}
Suppose that $v$ is to the left of the infinite spine of $\bt$ so that $v=v_{\bt,k}$ for some $k$.  Let $k^*$ be such that $v_{\bt,k^*}$ is the smallest vertex on the infinite spine of $\bt$ that is larger than $v_{\bt,k}$.  Since $v_{\bt,k}$ is not on the inifnite spine of $\bt$, $v_{\bt,k^*} \notin \bt_{v_{\bt,k}}$ and, since the sequence $(v_{\bt,j})_{j\geq 0}$ lists the vertices of $\bt$ on or to the left of the infinite spine of $\bt$ in increasing lexicographic order, $v_{\bt,k^*}$ is a lexicographic upper bound on the vertices of $\bt_{v_{\bt,k}}$ and, indeed, on the set of vertices of the Ulam-Harris tree $U_\infty$ of the form $v_{\bt,k}u$ for $u\in \cU$.  Letting
\[ m^*=1+ \max_{0\leq i\leq k^*} |v_{\bt,i}|\]
it follows from Lemma \ref{lem reduction} that if $n$ is large enough that $\bt_n^{[m^*]} = \bt^{[m^*]}$ then
\[(v_{\bt_n,0}, v_{\bt_n,1},\dots,v_{\bt_n,k^*}) = (v_{\bt,0}, v_{\bt,1},\dots, v_{\bt,k^*}),\]
Consequently, $v_{\bt_n,k}= v_{\bt,k}=v$ and, since $v_{\bt,k^*}$ is a lexicographic upper bound on the set of vertices of the Ulam-Harris tree $U_\infty$ of the form $v_{\bt,k}u$ for $u\in \cU$, we see that all of the vertices of $(\bt_n)_{v_{\bt_n,k}} =(\bt_n)_{v_{\bt,k}}$ are in $\{v_{\bt_n,0}, v_{\bt_n,1},\dots,v_{\bt_n,k^*}\}$.  Therefore
\[\begin{split} (\bt_n)_{v_{\bt_n,k}} & = \left\{ w \in \{v_{\bt_n,0}, v_{\bt_n,1},\dots,v_{\bt_n,k^*}\} : w=v_{\bt_n,k}u \textrm{ for some } u \in \cU\right\}\\
& = \left\{ w \in \{v_{\bt,0}, v_{\bt,1},\dots,v_{\bt,k^*}\} : w=v_{\bt,k}u \textrm{ for some } u\in \cU\right\}\\
& = \bt_{v_{\bt,k}}.\end{split}\]
If $v$ is to the right of the infinite spine, we apply the above argument to the tree $\hat \bt$ obtained by reversing the left-to-right order of the children of each vertex in $\bt$.
\end{proof}

We will also need the following result about vertices on the infinite spine of $\bt\in \fT_\infty$.

\begin{lemma} \label{lem spine}
Suppose that $\bt_n\ \in \fT_f$, $\bt \in \fT_\infty$ and $\bt_n\to \bt$.  If $v\in \bt$ is on the infinite spine of $\bt$ then for sufficiently large $n$, $v\in \bt_n$ and $|(\bt_n)_v| \to |\bt_v| =\infty$, where recall that $(\bt_n)_v$ is the fringe subtree of $\bt_n$ at $v$. 
\end{lemma}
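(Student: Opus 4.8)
The plan is to get $v\in\bt_n$ for large $n$ directly from the definition of convergence in $\fT$, and then to establish $|(\bt_n)_v|\to\infty$ by bounding $|(\bt_n)_v|$ from below by an arbitrary constant once $n$ is large enough. First note that since $v$ lies on the unique infinite spine of $\bt$, all the spine vertices $\eta_{|v|},\eta_{|v|+1},\dots$ beyond $v$ are descendants of $v$, so $\bt_v$ is an infinite, locally finite tree; in particular $|\bt_v|=\infty$, which gives the second equality in the statement. For the membership claim, apply the characterization ``$\bt_n\to\bt$ iff $\bt_n^{[m]}=\bt^{[m]}$ for all sufficiently large $n$'' with $m=|v|$: then $v\in\bt^{[|v|]}=\bt_n^{[|v|]}\subseteq\bt_n$ once $n$ is large.

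Now fix $N\in\N$. Since $\bt_v$ is infinite and locally finite, the balls $\bt_v\cap\bt^{[h]}$ (the descendants of $v$ in $\bt$ of height at most $h$) are finite, increasing in $h$, and exhaust $\bt_v$, so their sizes tend to infinity; hence there is a height $h\ge|v|$ with $|\bt_v\cap\bt^{[h]}|\ge N$. Set $m=h$ and take $n$ large enough that $\bt_n^{[m]}=\bt^{[m]}$. I claim $\bt_v\cap\bt^{[m]}\subseteq(\bt_n)_v$: if $w\in\bt_v$ with $|w|\le m$, then $w=vu$ for some $u\in\cU$ (this is exactly the definition of the fringe subtree at $v$, a condition on Ulam--Harris labels only), and $w\in\bt^{[m]}=\bt_n^{[m]}\subseteq\bt_n$, so $w\in(\bt_n)_v$. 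Therefore $|(\bt_n)_v|\ge|\bt_v\cap\bt^{[m]}|\ge N$ for all sufficiently large $n$. Since $N$ was arbitrary and each $\bt_n$ is finite, this shows $|(\bt_n)_v|\to\infty=|\bt_v|$.

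There is no substantial obstacle here; the only point needing a little care is that membership in the fringe subtree at $v$ is a purely combinatorial condition on Ulam--Harris labels (being of the form $vu$), so the equality $\bt_n^{[m]}=\bt^{[m]}$ transfers all descendants of $v$ of bounded height between the two trees for free. This is in contrast with Corollary \ref{cor fringe}, where one genuinely needs the lexicographic upper bound coming from the next spine vertex in order to control \emph{all} descendants of $v$ simultaneously; for the present lemma we only need a growing finite portion of $\bt_v$, which a height cutoff already provides.
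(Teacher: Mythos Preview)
Your proof is correct and follows essentially the same approach as the paper: both arguments obtain $v\in\bt_n$ from $\bt_n^{[|v|]}=\bt^{[|v|]}$ and then bound $|(\bt_n)_v|$ from below by exhibiting at least $N$ descendants of $v$ of bounded height that must transfer to $\bt_n$ once $\bt_n^{[m]}=\bt^{[m]}$. The only cosmetic difference is that the paper takes those $N$ vertices to be the next $N$ spine vertices $\eta_{|v|},\dots,\eta_{|v|+N}$ directly (setting $m=|v|+N$), whereas you invoke the more general fact that the height-truncations of the infinite, locally finite $\bt_v$ eventually have size $\ge N$; your observation about why this is easier than Corollary~\ref{cor fringe} is accurate and worth keeping.
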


\begin{proof}
Since $v\in \bt^{[|v|]}$, it follows that $v\in \bt_n$ for all sufficiently large $n$.  Fix $N\geq 1$.  Since $v$ is on the infinite spine of $\bt$, $\bt^{[|v|+N]}$ contains a lexicographically increasing path from $v$ to some vertex $u$ with height $|u| = |v|+N$ (specifically, the part of the infinite spine that goes from height $|v|$ to height $|v|+N$).  For all $n$ sufficiently large $\bt_n^{[|u|]} = \bt^{[|u|]}$ and, consequently, $\bt_n^{[|u|]}$ contains a lexicographically increasing path from $v$ to $u$.  This path contains $N+1$ vertices and is contained in $(\bt_n)_v$.  Thus $|(\bt_n)_v| \geq N+1$ for all $n$ sufficiently large, so that $|(\bt_n)_v| \to |\bt_v| =\infty$ as desired. 
\end{proof}

\begin{theorem}\label{thm deterministic}
Fix $\sigma \in S_3$. Suppose that $\bt_n\ \in \fT_f$, $\bt \in \fT_\infty$ and $\bt_n\to \bt$.  Then for each $k\in \N$, $\Phi^{\sigma}_{\bt_n}(k)\to \Phi^{\sigma}_{\bt}(k)$ in $\N^*$.
\end{theorem}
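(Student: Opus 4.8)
The plan is to handle the six patterns in three groups according to which tree statistics appear in $\Phi^\sigma$: the pair $\Phi^{\bf 231},\Phi^{\bf 213}$ (built from the $v$-ordering), the pair $\Phi^{\bf 312},\Phi^{\bf 132}$ (built from the $w$-ordering), and $\Phi^{\bf 321},\Phi^{\bf 123}$ (built from the leaf statistics $s_i,p_i$). Two facts will be used throughout. First, since $\bt\in\fT_\infty$ is infinite, $\bt^{[M]}$ has at least $M+1$ vertices (the spine up to height $M$), and $\bt_n^{[M]}=\bt^{[M]}$ for all large $n$; hence $|\bt_n|\to\infty$. Second, by Lemma \ref{lem reduction} (taking $m=1+\max_{0\le i\le k}|v_{\bt,i}|$, and similarly for the $w$'s), for each fixed $k$ we have $v_{\bt_n,k}=v_{\bt,k}$ and $w_{\bt_n,k}=w_{\bt,k}$ for all large $n$; in particular $|v_{\bt_n,k}|$ and $|w_{\bt_n,k}|$ are eventually constant.

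For $\sigma\in\{{\bf 231},{\bf 213}\}$ I would fix $k$, set $v=v_{\bt,k}$, and split on whether $v$ lies on the infinite spine. If $v$ is not on the spine, Corollary \ref{cor fringe} gives $(\bt_n)_v=\bt_v$ for all large $n$, so $|(\bt_n)_v|=|\bt_v|<\infty$ eventually; together with $|\bt_n|\to\infty$ this gives $\Phi^{\bf 231}_{\bt_n}(k)\to\Phi^{\bf 231}_\bt(k)<\infty$ and $\Phi^{\bf 213}_{\bt_n}(k)=|\bt_n|-|\bt_v|-k+|v|\to\infty=\Phi^{\bf 213}_\bt(k)$. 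If $v$ is on the spine then $|\bt_v|=\infty$, and Lemma \ref{lem spine} gives $|(\bt_n)_v|\to\infty$, hence $\Phi^{\bf 231}_{\bt_n}(k)\to\infty=\Phi^{\bf 231}_\bt(k)$. The one nontrivial subcase is $\Phi^{\bf 213}$ with $v$ on the spine: here $\bt\setminus\bt_v$ is \emph{finite} (the subtrees branching off the spine are finite), so $\Phi^{\bf 213}_\bt(k)$ is finite and $|(\bt_n)_v|\to\infty$ is of no direct help. For this I would show $\bt_n\setminus(\bt_n)_v=\bt\setminus\bt_v$ for all large $n$: with $M=\max\{|u|:u\in\bt\setminus\bt_v\}$, every vertex of $\bt$ of height $M+1$ is a descendant of $v$, so once $\bt_n^{[M+1]}=\bt^{[M+1]}$ the same is true in $\bt_n$, and following ancestor chains every vertex of $\bt_n$ not descended from $v$ has height $\le M$; since $\bt_n^{[M]}=\bt^{[M]}$ the two complements coincide. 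The pair $\Phi^{\bf 312},\Phi^{\bf 132}$ is handled the same way, using the $w$-version of Lemma \ref{lem reduction} and Corollary \ref{cor fringe}; the simplification is that $w_{\bt,k}$ is never on the spine, so $|\bt_{w_{\bt,k}}|$ is always finite, $(\bt_n)_{w_{\bt,k}}=\bt_{w_{\bt,k}}$ eventually, and we get $\Phi^{\bf 312}_{\bt_n}(k)\to\Phi^{\bf 312}_\bt(k)<\infty$ and $\Phi^{\bf 132}_{\bt_n}(k)=|\bt_n|-k+|\bt_{w_{\bt,k}}|-|w_{\bt,k}|\to\infty=\Phi^{\bf 132}_\bt(k)$.

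For $\sigma={\bf 321}$ the core is a stabilization statement: for each fixed $i_0$, for all large $n$ one has $l_{\bt_n,i}=l_{\bt,i}$, $s_{\bt_n,i}=s_{\bt,i}$, and $p_{\bt_n,i}=p_{\bt,i}$ for all $i\le i_0$. I would prove this by applying Lemma \ref{lem reduction} with $j=s_{\bt,i_0}$: once $\bt_n$ and $\bt$ agree up to the resulting height $m$, their first $s_{\bt,i_0}+1$ vertices in lexicographic order coincide, and since those vertices have height $<m$ their degrees agree in $\bt_n$ and $\bt$, so being a leaf is preserved; because leaves cannot lie on the spine, the leaves among $v_{\bt,0},\dots,v_{\bt,s_{\bt,i_0}}$ are exactly $l_{\bt,1},\dots,l_{\bt,i_0}$, and these are therefore the $i_0$ lexicographically smallest leaves of $\bt_n$, with the same $s$- and $p$-values. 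Granting this, fix $k$, let $v=\Phi^{\bf 321}_\bt(k)$ (finite, since $\Phi^{\bf 321}_\bt$ is a bijection $\N\to\N$), and take $i_0=\max(k,v)$. Using $s_{\bt,i}\ge i$ together with the monotonicity of $i\mapsto s_{\bt,i}$ and $i\mapsto s_{\bt,i}-p_{\bt,i}+1$, stabilization yields $B(\bt_n)\cap[k]=B(\bt)\cap[k]$ and $A(\bt_n)\cap[v]=A(\bt)\cap[v]$ for large $n$, where $A$ and $B$ denote the value sets $\{s_i\}$ and $\{s_i-p_i+1\}$. If $k\in B(\bt)$, say $k=s_{\bt,i_1}-p_{\bt,i_1}+1$, then eventually $\Phi^{\bf 321}_{\bt_n}(k)=s_{\bt_n,i_1}=s_{\bt,i_1}=v$. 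If $k\notin B(\bt)$, the rank of $k$ in the increasingly-ordered complement of $B$ agrees for $\bt$ and $\bt_n$, $v$ is the element of that rank in the complement of $A$ for both (here $v\notin A(\bt_n)$ since the $A$'s agree on $[v]$), and unwinding the definition of $\Phi^{\bf 321}$ gives $\Phi^{\bf 321}_{\bt_n}(k)=v$ for large $n$. Finally $\Phi^{\bf 123}_{\bt_n}(k)=|\bt_n|+1-\Phi^{\bf 321}_{\bt_n}(k)\to\infty=\Phi^{\bf 123}_\bt(k)$ since $|\bt_n|\to\infty$.

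I expect the main obstacle to be the $\sigma={\bf 321}$ analysis: establishing the stabilization of $(s_i,p_i)$ from Lemma \ref{lem reduction}, and then carefully translating it into convergence of the permutation value through the two-stage definition of $\Phi^{\bf 321}$ (prescribed values on $B$, order-preserving bijection from the complement of $B$ to the complement of $A$). The second point needing a genuinely new argument is the ``cut at height $M+1$'' step for $\Phi^{\bf 213}$ on the spine; everything else reduces cleanly to Lemma \ref{lem reduction}, Corollary \ref{cor fringe}, Lemma \ref{lem spine}, and $|\bt_n|\to\infty$.
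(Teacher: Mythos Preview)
Your proposal is correct and follows essentially the same route as the paper: every case is reduced to Lemma~\ref{lem reduction}, Corollary~\ref{cor fringe}, and Lemma~\ref{lem spine}, with the ${\bf 321}$ case handled by showing that the leaf data $(s_i,p_i)$ stabilize on an initial segment and then tracking the two-stage definition of $\Phi^{\bf 321}$ through the complements of $A$ and $B$. The paper packages the ${\bf 321}$ bookkeeping via auxiliary quantities $\alpha_{\br,j}$ and $\beta_{\br,j}$ rather than your explicit ``rank in the complement'' language, but the content is identical; your ``cut at height $M+1$'' step for $\Phi^{\bf 213}$ when $v$ is on the spine is exactly what the paper means by ``arguing as in Corollary~\ref{cor fringe}.'' One small presentational improvement in your version is the observation that $w_{\bt,k}$ never lies on the spine, which collapses the ${\bf 312}$/${\bf 132}$ analysis to a single case; the paper simply says these are ``similar to ${\bf 231}$.'' The only place to tighten your write-up is the ${\bf 321}$ step where you assert $B(\bt_n)\cap[k]=B(\bt)\cap[k]$: you invoke $s_{\bt,i}\ge i$ and monotonicity, but to conclude that indices $i>i_0$ cannot contribute to $B\cap[k]$ you also need $s_i-p_i+1\ge i$, which follows since this sequence is strictly increasing in the positive integers with first term $1$.
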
 

\begin{proof}

Fix $k\in \N$.  Note that it follows readily from the definition of the local topology and that $\bt_n\to \bt$ if and only if for each $m\geq 1$, $\bt_{n}^{[m]} = \bt^{[m]}$ for all sufficiently large $n$.  

$ $

\noindent We start with the $\sigma={\bf 321}$ case, which is the most difficult.  Let $(l_1, l_2,\dots)$ be a list of the leaves to the left of the infinite spine of $\bt$ in increasing lexicographic order and let $(\eta_0,\eta_1,\dots)$ be the list of vertices on the infinite spine in increasing lexicographic order (where $\eta_0$ is the root of $\bt$).  For each $i$ let $b(i)$ be such that $\eta_{b(i)}$ is the last vertex on the infinite spine of $\bt$ that is on the path from the root to $l_i$.  Let 
\[ m_i = 1+ \Ht(\bt \setminus \bt_{\eta_{b(i)+1}}),\]
which is finite because $\eta_{b(i)+1}$ is on the infinite spine. 
There are two cases we must consider: the case when there exists $k^*$ such that $s_{k^*}-p_{k^*}+1=k$ and the case when there is no such $k^*$.

Suppose that there exists $k^*$ such that $s_{\bt,k^*}-p_{\bt,k^*}+1=k$.  Observe that, by Lemma \ref{lem reduction}, if $\bt_{n}^{[m_i]} = \bt^{[m_i]}$ then $s_{\bt_n,i} = s_{\bt,i}$ and $p_{\bt_n,i} = p_{\bt,i}$.  Consequently, if $n$ is so large that $\bt_{n}^{[m_{k^*}]} = \bt^{[m_{k^*}]}$ then
\[\Phi^{\bf 321}_\bt(k) = \Phi^{\bf 321}_{\bt}(s_{\bt,k^*}-p_{\bt,k^*}+1) = s_{\bt,k^*}=s_{\bt_n,k^*}=\Phi^{\bf 321}_{\bt_n}(s_{\bt_n,k^*}-p_{\bt_n,k^*}+1)=\Phi^{\bf 321}_{\bt_n}(k).  \]

Now suppose that there does not exist such a $k^*$.  Then $k\in \{s_{\bt,i}-p_{\bt,i}+1 : i\geq 1\}^c$.   For $\br \in \fT_f$ (respectively, $\br \in \fT_\infty$) and $1\leq j \leq L(\br)$ (respectively $1\leq j <\infty$) let
\[ \alpha_{\br,j} = |\{s_{\br,i}-p_{\br,i}+1 : 1\leq i\leq j\}^c\cap\{1,\dots,j\}| \]
and
\[ \beta_{\br,j} = \inf\{ \ell: |\{s_{\br,i} : 1\leq i\leq \ell\}^c\cap\{1,\dots,\ell\}| \geq \alpha_{\br,j}\}.\]
It follows from the fact that $i\mapsto s_{\br,i}-p_{\br,i}+1$ and $i\mapsto s_{\br,i}$ are increasing that if $\br\in \fT_f$ then
\[ \{s_{\br,i}-p_{\br,i}+1 : 1\leq i\leq j\}^c\cap\{1,\dots,j\} = \{s_{\br,i}-p_{\br,i}+1 : 1\leq i\leq L(\br)\}^c\cap\{1,\dots,j\}\]
and
\begin{multline*} \inf\{ \ell: |\{s_{\br,i} : 1\leq i\leq \ell\}^c\cap\{1,\dots,\ell\}| \geq \alpha_{\br,j}\} \\ = \inf\{ \ell: |\{s_{\br,i} : 1\leq i\leq L(\br)\}^c\cap\{1,\dots,\ell\}| \geq \alpha_{\br,j}\}.\end{multline*}
The analogous statements hold when $\br\in\fT_\infty$.  

Consequently, for $\br \in \fT_f\cup\fT_\infty$, the unique increasing bijection from $\{s_{\br,i}-p_{\br,i}+1 : i\geq 1\}^c$ to $\{s_{\br,i} : i\geq 1\}^c$ is  $j\mapsto \beta_{\br,j}$. Therefore if $i\in\{s_{\br,i}-p_{\br,i}+1 : i\geq 1\}^c$ then $\Phi^{\bf 321}_{\br}(i)=\beta_{\br,i}$ and, in particular, $\Phi^{\bf 321}_{\bt}(k) = \beta_{\bt,k}$.  Let $\zeta = \max(k,\beta_{\bt,k})$.  From Lemma \ref{lem reduction}, we see that if $\bt_n^{[m_\zeta]}=\bt^{[m_\zeta]}$ then for $j\leq \zeta$,
\[\{s_{\bt_n,i}-p_{\bt_n,i}+1 : 1\leq i\leq j\}^c=\{s_{\bt,i}-p_{\bt,i}+1 : 1\leq i\leq j\}^c\]
and
\[ \{s_{\bt_n,i} : 1\leq i\leq j\}^c= \{s_{\bt,i} : 1\leq i\leq j \}^c.\]
Consequently, $k\in \{s_{\bt_n,i}-p_{\bt_n,i}+1 : 1\leq i\leq k\}^c$, $\alpha_{\bt_n,k} = \alpha_{\bt,k} $, and $\beta_{\bt_n,k} =\beta_{\bt,k} $.  Therefore, for sufficiently large $n$, 
\[ \Phi^{\bf 321}_{\bt_n}(k)= \beta_{\bt_n,k} =\beta_{\bt,k} = \Phi^{\bf 321}_{\bt}(k).\]
This completes the proof that for each $k$, $\Phi^{\bf 321}_{\bt_n}(k)=\Phi^{\bf 321}_{\bt}(k)$ for all $n$ sufficiently large.

$ $

\noindent The case when $\sigma={\bf 123}$ follows immediately from the case when $\sigma={\bf 321}$ since $\Phi^{\bf 321}_{\bt}(k)<\infty$ for all $k$.

$ $

\noindent Now consider $\sigma={\bf 231}$.  If follows from Lemma \ref{lem reduction} that $v_{\bt_n,k}=v_{\bt,k}$ for sufficiently large $n$.  From Corollary \ref{cor fringe} or Lemma \ref{lem spine}, depending on whether or not $v_k$ is on the infinite spine of $\bt$, that 
\[ |(\bt_n)_{v_{\bt_n,k}}| = |(\bt_n)_{v_{\bt,k}}| \to |(\bt)_{v_{\bt,k}}|.\]
Thus $\Phi^{\bf 231}_{\bt_n}(k)\to\Phi^{\bf 231}_{\bt}(k)$ as desired.

$ $

\noindent Now consider $\sigma={\bf 213}$. If $v_{\bt,k}$ is on the infinite spine of $\bt$, then $\bt\setminus \bt_{v_{\bt,k}}$ is a finite tree.  Let $m= 1+ht(\bt\setminus \bt_{v_{\bt,k}})$.  If $n$ is sufficiently large that $v_{\bt_n,k} = v_{\bt,k}$ and $\bt_n^{[m]} = \bt_n^{[m]}$, then arguing as in Corollary \ref{cor fringe} shows that
\[ \bt\setminus \bt_{v_{\bt,k}} = \bt_n\setminus \bt_{v_{\bt_n,k}}.\]
If $v_{\bt,k}$ is not on the infinite spine of $\bt$, then $\bt\setminus \bt_{v_{\bt,k}}$ is an infinite tree.  If $n$ is sufficiently large that $v_{\bt_n,k} = v_{\bt,k}$ then arguing as in Lemma \ref{lem spine} shows that
\[ |\bt_n\setminus \bt_{v_{\bt_n,k}}| \to |\bt\setminus \bt_{v_{\bt,k}}| =\infty.\]
In both cases, we see that $\Phi^{\bf 213}_{\bt_n}(k)\to \Phi^{\bf 213}_{\bt}(k)$, as desired.

$ $

\noindent The case when $\sigma={\bf 312}$. is similar to the case when $\sigma={\bf 231}$.

$ $

\noindent The case when $\sigma={\bf 132}$. is similar to the case when $\sigma={\bf 231}$.
\end{proof}

\section{Local limits of pattern avoiding permutations}
Let $T_{n+1}$ be a uniformly random element of $\fT_{n+1}$ and let $T$ be a $Geometric(1/2)$-Galton-Watson tree.  It is well known, and easy to verify, that $T_{n+1}$ is distributed like $T$ conditioned to have exactly $n+1$ vertices.  Let $\tilde T$ be a size-biased $Geometric(1/2)$-Galton-Watson tree.  

\begin{theorem}\label{thm main precise}
Fix $\sigma \in S_3$ and for each $n$ let $\Pi_n$ be a uniformly random element of $\avn{\sigma}$, considered as a random element in $F(\N,\N^*)$.  Then
\[ \Pi_n \overset{d}{\longrightarrow} \Phi^\sigma_{\tilde T}.\]
\end{theorem}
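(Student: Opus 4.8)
The plan is to combine the exact bijections of Section~\ref{treebijection}, the local limit theorem for Galton-Watson trees (Lemma~\ref{lemma local limit}), and the deterministic continuity statement (Theorem~\ref{thm deterministic}) via a continuous-mapping argument. First I would record the distributional identity $\Pi_n \overset{d}{=} \Phi^\sigma_{T_{n+1}}$. This is immediate from Theorem~\ref{thm bij1} when $\sigma\in\{{\bf 123},{\bf 321}\}$ and from Theorem~\ref{thm bij2} when $\sigma\in\{{\bf 231},{\bf 213},{\bf 312},{\bf 132}\}$, since in each case $\bt\mapsto\Phi^\sigma_\bt$ is a bijection from $\fT_{n+1}$ onto $\avn{\sigma}$ and $T_{n+1}$ is uniform on $\fT_{n+1}$. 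Because $T_{n+1}$ is distributed like the $Geometric(1/2)$-Galton-Watson tree $T$ conditioned on $\# T = n+1$, Lemma~\ref{lemma local limit} gives $T_{n+1}\overset{d}{\longrightarrow}\tilde T$ in $\fT$.

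Next I would check that $\tilde T\in\fT_\infty$ almost surely, which is what allows Theorem~\ref{thm deterministic} to be applied with $\bt=\tilde T$. By Lemma~\ref{lemma sizebp}, $\tilde T$ is a.s.\ locally finite with a unique infinite spine carrying, to its left and right, infinitely many independent $Geometric(1/2)$-Galton-Watson trees. Each such tree lies in $\fT_f$ (i.e.\ has at least two vertices) with probability $\P(\# T\geq 2)=1/2>0$, so by independence and the second Borel-Cantelli lemma, with probability $1$ infinitely many of the trees attached to the left of the spine lie in $\fT_f$. Together with Lemma~\ref{lemma sizebp} this is exactly the defining requirement for membership in $\fT_\infty$, so $\tilde T\in\fT_\infty$ a.s.

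Finally I would transfer the pathwise convergence of Theorem~\ref{thm deterministic} to a convergence in distribution. The map $\bt\mapsto\Phi^\sigma_\bt$ is Borel measurable, since each coordinate $\bt\mapsto\Phi^\sigma_\bt(k)$ is a measurable function of the degrees $(d_\bt(u))_{u\in\cU}$, and $F(\N,\N^*)$, being a countable product of the compact metric space $\N^*$, is compact and metrizable. Since $T_{n+1}\overset{d}{\longrightarrow}\tilde T$, the Skorokhod representation theorem furnishes, on a common probability space, trees $\hat T_n\overset{d}{=}T_{n+1}$ and $\hat T\overset{d}{=}\tilde T$ with $\hat T_n\to\hat T$ in $\fT$ almost surely. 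On the probability-one event that $\hat T_n\in\fT_f$ for all $n$ (automatic, as $\fT_{n+1}\subseteq\fT_f$) and $\hat T\in\fT_\infty$, Theorem~\ref{thm deterministic} yields $\Phi^\sigma_{\hat T_n}(k)\to\Phi^\sigma_{\hat T}(k)$ in $\N^*$ for every $k$, i.e.\ $\Phi^\sigma_{\hat T_n}\to\Phi^\sigma_{\hat T}$ in $F(\N,\N^*)$ almost surely; hence $\Phi^\sigma_{\hat T_n}\overset{d}{\longrightarrow}\Phi^\sigma_{\hat T}$. Undoing the coupling gives $\Pi_n\overset{d}{=}\Phi^\sigma_{T_{n+1}}\overset{d}{=}\Phi^\sigma_{\hat T_n}\overset{d}{\longrightarrow}\Phi^\sigma_{\hat T}\overset{d}{=}\Phi^\sigma_{\tilde T}$, which is the assertion.

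I expect essentially all the substantive difficulty to have already been absorbed into Theorem~\ref{thm deterministic}; the remaining delicacy is purely organizational, namely ensuring the (non-globally-continuous) map $\Phi^\sigma$ is only ever evaluated on $\fT_f\cup\fT_\infty$ where the continuity of Theorem~\ref{thm deterministic} holds. The Skorokhod route handles this cleanly because $T_{n+1}$ never leaves $\fT_f$ and, by the Borel-Cantelli step, $\tilde T\in\fT_\infty$ a.s. The one point I would be careful to spell out is precisely that Borel-Cantelli argument, since it is what certifies $\tilde T\in\fT_\infty$ and hence makes Theorem~\ref{thm deterministic} applicable.
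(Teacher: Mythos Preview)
Your proposal is correct and follows essentially the same route as the paper: identify $\Pi_n\overset{d}{=}\Phi^\sigma_{T_{n+1}}$ via the bijections, invoke Lemma~\ref{lemma local limit} for $T_{n+1}\overset{d}{\longrightarrow}\tilde T$, upgrade to almost-sure convergence via Skorokhod, note $\tilde T\in\fT_\infty$ a.s., and apply Theorem~\ref{thm deterministic}. Your version is in fact slightly more careful than the paper's: the paper simply cites Lemma~\ref{lemma sizebp} for $\P(\tilde T\in\fT_\infty)=1$, whereas the definition of $\fT_\infty$ also demands that infinitely many of the subtrees to the left of the spine lie in $\fT_f$, which your Borel--Cantelli step explicitly verifies.
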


\begin{proof}
From Theorem \ref{thm bij1} or Theorem \ref{thm bij2} depending on $\sigma$, we see that $\Pi_n \overset{d}{=}  \Phi^\sigma_{T_{n+1}}$.  From Lemma \ref{lemma local limit}, we see that $T_{n+1} \overset{d}{\longrightarrow} \tilde T$ and, from the Skorokhod representation theorem, we may assume this convergence happens almost surely.  From Lemma \ref{lemma sizebp} we see that $\P(\tilde T \in \fT_\infty)=1$.  The result now follows from Theorem \ref{thm deterministic}.
\end{proof}

\bibliographystyle{plain}
\bibliography{pattern}

\end{document}